\documentclass{amsart}
\usepackage{mathptmx, bbm, amscd, amssymb, enumerate, colonequals, mathdots, xcolor,comment,graphicx,lineno,xspace,mathtools}

\usepackage{color}
\usepackage{tikz-cd}
\definecolor{chianti}{rgb}{0.6,0,0}
\definecolor{meretale}{rgb}{0,0,.6}
\definecolor{leaf}{rgb}{0,.35,0}
\usepackage[colorlinks=true, pagebackref, hyperindex, citecolor=meretale, urlcolor=leaf, linkcolor=chianti]{hyperref}
\usepackage[all]{xy}
\usepackage{paralist}
\usepackage{mathrsfs}
\usepackage{enumitem}

\newtheorem{theorem}{Theorem}[section]
\newtheorem{lemma}[theorem]{Lemma}
\newtheorem{corollary}[theorem]{Corollary}
\newtheorem{proposition}[theorem]{Proposition}
\newtheorem{maintheorem}{Theorem}

\theoremstyle{definition}
\newtheorem{definition}[theorem]{Definition}

\newtheorem{example}[theorem]{Example}
\newtheorem{remark}[theorem]{Remark}
\newtheorem{question}[theorem]{Question}
\numberwithin{equation}{theorem}
\DeclarePairedDelimiter\floor{\lfloor}{\rfloor}

\def\ara{\operatorname{ara}}

\def\height{\operatorname{ht}}

\def\image{\operatorname{image}}

\def\ctens{\operatorname{\widehat{\otimes}}}
\def\ctor{\operatorname{\widehat{Tor}}}
\def\dtens{\operatorname{\overset{\mathbb{L}}{\otimes}}}
\def\bight{\operatorname{bight}}
\def\cd{\operatorname{cd}}

\def\Spec{\operatorname{Spec}}
\def\Supp{\operatorname{Supp}}

\def\CC{\mathbb{C}}

\def\phi{\varphi}

\def\to{\longrightarrow}

\def\onto{{\,\,\longrightarrow\hspace{-1.8ex}\rightarrow}\,\,}

\begin{document}
\title[Vanishing of local cohomology in unramified mixed characteristic]{Vanishing of local cohomology in unramified\\mixed characteristic}

\author[Batavia]{Manav Batavia}
\address{Department of Mathematics, Purdue University, 150 N University St., West Lafayette, IN~47907, USA}
\email{mbatavia@purdue.edu}

\dedicatory{Dedicated to Gennady Lyubeznik for his immense contributions to local cohomology.}

\begin{abstract}
     Given an ideal $I$ in a regular local ring $A$, the cohomological dimension of $I$ in $A$ is the index of the highest non-vanishing local cohomology of $A$ supported at $I$. Determining effective upper bounds on the cohomological dimension in terms of topological invariants of $\Spec(A/I)$ is a central problem in commutative algebra. In equal characteristic, Faltings proved in 1980 a general bound on the cohomological dimension of an ideal in terms of its big height. In this article, we extend Faltings’ result to the unramified mixed characteristic setting and show that the resulting bound is sharp.
\end{abstract}
\maketitle

\section{Introduction}

Local cohomology, introduced by Grothendieck in the early 1960s, plays a pivotal role in commutative algebra and algebraic geometry. Its vanishing behavior reflects subtle geometric and topological properties of the underlying scheme and has been studied extensively in the work of Grothendieck, Hartshorne, Faltings, and others. Understanding when local cohomology modules vanish remains a fundamental and delicate problem. 

Let $A$ be a commutative Noetherian ring of Krull dimension $d$ and $I$ be an ideal of $A$. The \textit{cohomological dimension} of $I$ in $A$ is $$\cd(A,I):=\max\{n\:|\:H^n_I(A)\neq 0\}.$$ Alternatively, $\cd(A,I)$ is the least integer $n$ such that $H^q_I(M)=0$ for all $A$-modules $M$ and $q>n$ \cite[Theorem 9.6]{24h}. As $H^q_I(M)=0$ for all $A$-modules $M$ and $q>d$ \cite{Gr}, $\cd(A,I)$ always exists and is bounded above by $d$.

The computation of better upper bounds on cohomological dimension has been of great interest over the past few decades. It is well-known that $$\cd(A,I)\leq\ara(I),$$ where $\ara(I)$, the $\textit{arithmetic rank}$ of $I$, is the least number of equations required to generate $I$ up to radical. However, the arithmetic rank of an ideal is difficult to compute and can be far from the cohomological dimension \cite{BS,Bar,JPSW,BMMP}. 

A complementary approach is to bound the cohomological dimension using topological invariants of the spectrum of $A/I$. The most famous result in this direction is the Hartshorne--Lichtenbaum Vanishing Theorem \cite{Ha1}, which states that for a complete local domain $(A,\mathfrak{m})$, we have $\cd(A,I)\leq d-1$ if and only if $I$ is not $\mathfrak{m}$-primary. Furthermore, the Second Vanishing Theorem \cite{Ogus,PS,HunLyu,Zhang} asserts that in equal characteristic or unramified mixed characteristic, for a complete regular local ring $(A,\mathfrak{m})$ with separably closed residue field, we have $\cd(A,I)\leq d-2$ if and only if $\dim(A/I)\geq 2$ and the punctured spectrum of $A/I$ is connected. Peskine and Szpiro \cite{PS} proved that if $(A,\mathfrak{m)}$ is a regular local ring of equal characteristic $p$ and $A/I$ is Cohen-Macaulay, then $\cd(A,I)=\height(I).$ Beyond the results discussed here, further vanishing statements have been proved in various settings \cite{MR3078644,DaoTakagi, Burke}, though the conditions required become progressively harder to check. Ogus \cite{Ogus}, Hartshorne and Speiser \cite{HarSpe}, Lyubeznik \cite{Lyu3}, Mustață and Popa \cite{MuPo}, and Reichelt, Saito and Walther \cite{RTW} obtained concrete formulae for cohomological dimension in equal characteristic in terms of singularity invariants, see also \cite{BBLSZ}. However, these methods require the understanding of de Rham cohomologies, Hodge filtrations, or Frobenius structures, and in practice they can be difficult to compute.

In equal characteristic, Faltings \cite{Faltings} established a general bound on cohomological dimension in terms of the \textit{big height} of an ideal. The big height of an ideal is the maximum of the heights of all its minimal primes. In this article, we extend Faltings' result to the unramified mixed characteristic setting.

\begin{maintheorem}\label{thm:maintheorem}(cf. Theorem \ref{thm:cdbound})
Let $A$ be a $d$-dimensional unramified regular local ring of mixed characteristic $(0,p)$. Let $I$ be a nonzero proper ideal of $A$ with big height $c$. Then $$\cd(A,I)\leq d-\floor*{\frac{d-1}{c}}.$$  
\end{maintheorem}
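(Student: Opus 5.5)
We reduce to the case where $A$ is complete with separably closed residue field, and then run a Faltings-type induction on $d$. The induction needs two mixed characteristic inputs: the Hartshorne--Lichtenbaum theorem and the Second Vanishing Theorem, the latter available in unramified mixed characteristic by \cite{HunLyu,Zhang}.

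\emph{Reduction.} Local cohomology commutes with flat base change, and the completion of $A$ stays unramified regular of dimension $d$. Passing to the completion and then to a strict Henselization or an unramified extension of the residue field, we may assume $A$ is complete with separably closed residue field. Minimal primes of $I\widehat A$ lie over minimal primes of $I$ and have the same height by flatness, so the big height does not increase. By Cohen's structure theorem, $A\cong V[[x_2,\dots,x_d]]$ with $V$ a complete unramified DVR. We may also assume $I=\sqrt I$ and that $c<d$, since $c=d$ means $I$ is $\mathfrak m$-primary and the bound reads $\cd\le d-1$, which holds because $d-1<d$ forces $\lfloor (d-1)/d\rfloor=0$, so the bound is $d$ and is trivial.

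\emph{Induction.} Write $k=\lfloor (d-1)/c\rfloor$; we need $H^q_I(A)=0$ for $q>d-k$. Following Faltings, we localize at non-maximal primes $P\supseteq I$. For such $P$, the ring $A_P$ is regular of dimension $<d$, and it is unramified when $p\notin P$ (equal characteristic zero, where Faltings' theorem applies) or when $p\notin P^{(2)}$. The ramified case $p\in P^{(2)}$ cannot occur for $P\ne \mathfrak m$: since $A/pA$ is regular, every localization $A_P$ with $p\in P$ has $p$ as a regular parameter. The big height of $IA_P$ is at most $c$. So by induction, or by Faltings in equal characteristic, $H^q_I(A)_P=0$ for $q>\dim A_P-\lfloor(\dim A_P-1)/c\rfloor$. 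Since the function $n\mapsto n-\lfloor (n-1)/c\rfloor$ is nondecreasing, and it drops by at most one when $n$ drops by one, the modules $H^q_I(A)$ for $q>d-k$ are supported only at $\mathfrak m$ (using a uniform bound over all $P\ne\mathfrak m$; any needed slack should come from an improvement of at least one over the trivial bound once $\dim A_P\le d-1$).

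\emph{Killing the $\mathfrak m$-supported part.} So for $q>d-k$, $H^q_I(A)$ is supported at $\mathfrak m$. The standard approach is a Mayer--Vietoris or spectral-sequence argument with a hyperplane section. Choose a general element $x\in\mathfrak m$, which we may take as $p$ or as $p-x_2$ to preserve unramifiedness. The quotient $A/xA$ is again unramified regular of dimension $d-1$, and the image of $I$ has big height $\le c$. The long exact sequence for $0\to A\xrightarrow{x}A\to A/xA\to 0$, combined with induction on $A/xA$, gives that multiplication by $x$ is surjective on $H^q_I(A)$ in high degrees. For an $\mathfrak m$-supported module on which $x$ acts surjectively, one then argues via Matlis duality and injectivity (Huneke--Sharp/Lyubeznik finiteness in mixed characteristic: $\mathfrak m$-supported local cohomology of unramified regular rings is injective, by Lyubeznik's mixed characteristic results) that the module is $E^{\oplus n}$, and one counts $n$ via Bass numbers.

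\emph{Main obstacle.} The real content, and the step I expect to be hardest, is replacing Faltings' use of the connectedness/Second Vanishing machinery by something valid in mixed characteristic. Following Faltings' own proof, I would pass to the ideal $J=I+\mathfrak a$, where $\mathfrak a$ is generated by $k$ general elements, and compare via the Mayer--Vietoris and Grothendieck spectral sequences $H^i_{\mathfrak a}H^j_I\Rightarrow H^{i+j}_{I+\mathfrak a}$. Choosing the generators so that $\dim A/(I+\mathfrak a)$ is controlled lets us apply Hartshorne--Lichtenbaum and Second Vanishing on $A$ (both known for unramified complete regular rings) to force the vanishing in degrees above $d-k$. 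The delicate point is ensuring that the general elements can include $p$-dependent choices without destroying the unramified hypothesis needed for the Second Vanishing Theorem.
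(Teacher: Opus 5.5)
There is a genuine gap: the proposal does not handle the critical case described below, which is where the theorem's real content lies.

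\textbf{What works, and what the hyperplane step actually gives.} Your reduction and localization steps are sound. Since $n\mapsto n-\lfloor (n-1)/c\rfloor$ is nondecreasing, two inputs show that $H^q_I(A)$ is supported at $\mathfrak m$ for $q>d-N$, where $N=\lfloor (d-1)/c\rfloor$ is your $k$:
\begin{itemize}
\item for $p\in P$, induction applies (your remark that $A_P/pA_P$ is regular is a clean way to see $A_P$ is unramified);
\item for $p\notin P$, Faltings' bound in equal characteristic zero applies.
\end{itemize}
No extra slack is needed. The hyperplane step, however, extracts the wrong information. Surjectivity of $x$ is vacuous for an $\mathfrak m$-supported module, since the injective hull $E$ of the residue field is divisible. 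What you need is \emph{injectivity}. The sequence $H^{q-1}_I(A/xA)\to H^q_I(A)\xrightarrow{x}H^q_I(A)$, together with $\bight(I(A/xA))\le c$ and either Faltings on $A/pA$ (for $x=p$) or induction (for $x=p-x_2$), shows that $x$ is injective on $H^q_I(A)$ for $q>d-\lfloor (d-2)/c\rfloor$. Since $\mathfrak m$-supported modules are $x$-power torsion, this already proves the theorem whenever $c\nmid d-1$, with no case distinction on $p$. So your strategy reduces everything to the critical case $d-1=Nc$, $q=d-N+1$. There $H^{d-N}_I(A/xA)$ can be nonzero, and one must show that the connecting map $H^{d-N}_I(A/xA)\to H^{d-N+1}_I(A)$ vanishes. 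This is exactly where the paper, too, needs new input.

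\textbf{The critical case.} Your proposal has no working mechanism here.
\begin{itemize}
\item Injectivity of $\mathfrak m$-supported local cohomology of unramified regular rings is not available. Zhou's theorem gives only injective dimension at most $1$, which is why the paper rules out injective dimension $1$ by a separate argument.
\item Even granting $H^{d-N+1}_I(A)\cong E^{\oplus n}$, Bass numbers cannot force $n=0$. The submodule $(0:_{E^{\oplus n}}x)$ is just a quotient of $H^{d-N}_I(A/xA)$, which need not vanish.
\item The plan via $I+\mathfrak a$ is not an argument. Hartshorne--Lichtenbaum and the Second Vanishing Theorem control only degrees $d$ and $d-1$, whereas the critical degree is $d-N+1$. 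You do not say how the spectral sequence would transport information there.
\end{itemize}

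\textbf{How the paper fills this.} It splits on the behaviour of $p$.
\begin{itemize}
\item If $\height I<\height(I:(I:p))$ (e.g.\ $p$ a nonzerodivisor on $A/I$), it uses the criterion of Theorem \ref{thm:induction}, whose core is a diagonal argument. With $D=A/I\ctens_V A$ and $\Delta$ the diagonal ideal, one has $H^{\dim(A/I)-1}_\Delta(D\dtens_A H^n_I(A))=0$, while the same functor is nonzero on $E$. Zhou's bound, finite Bass numbers, Matlis duality and Nakayama on $\omega_D$ then force $H^n_I(A)=0$.
\item If $p\in I$, the drop $\bight(I/(p))=c-1$ gives slack.
\item Otherwise it writes $I=I_1\cap I_2$ with $p\in I_1$ and $p$ regular on $A/I_2$. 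Set $J=\sqrt{I+(p)}/(p)$ and $J_2=\sqrt{I_2+(p)}/(p)$. In the critical case it uses Lyubeznik's description (Theorem \ref{thm:Lyu}) of $H^{d-N}_J(A/(p))$ to show that $H^{d-N}_{J_2}(A/(p))\to H^{d-N}_J(A/(p))$ is surjective. Since $H^{d-N+1}_{I_2}(A)=0$ by the first case, the connecting map into $H^{d-N+1}_I(A)$ vanishes.
\end{itemize}
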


We also establish an inductive criterion for the vanishing of local cohomology modules. See Theorem \ref{thm:induction} for a more general statement than the one stated below.

\begin{maintheorem}\label{thm:indcrit}(cf. Theorem \ref{thm:induction})
    Let $(A,\mathfrak{m})$ be an unramified regular local ring of mixed characteristic $(0,p)$ and $I$ be an ideal of $A$ of big height $c$. Suppose $p$ is a nonzerodivisor on $A/I.$
    
    Let $n>c$ be an integer. Assume that for all integers $s$, with $1\leq s\leq c-1,$ and for all $q\geq n-s$, $H^q_{IA_P}(A_P)=0$ for all $P\in \Spec(A)$ such that $I\subseteq P$ and $\dim(A/P)\geq s+1.$ Then $H^q_I(A)=0$ for all $q\geq n.$
\end{maintheorem}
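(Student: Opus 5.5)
The plan is to reduce modulo $p$ and use Faltings' equal-characteristic criterion on $A/pA$, combined with an induction on $\dim A$ that pushes the support of $H^q_I(A)$ down to $\{\mathfrak m\}$. I expect the reduction modulo $p$ to be the main obstacle.

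\textbf{Step 1: the closed point.} For $q\ge n$, suppose we know two things:
\begin{enumerate}[label=(\roman*)]
\item $\Supp H^q_I(A)\subseteq\{\mathfrak m\}$;
\item $H^{q-1}_I(A/pA)=0$ for $q\ge n$, i.e.\ $H^{j}_I(A/pA)=0$ for $j\ge n-1$.
\end{enumerate}
Since $p$ is a nonzerodivisor on $A$, the sequence $0\to A\xrightarrow{p}A\to A/pA\to 0$ gives an exact sequence
\[
H^{q-1}_I(A/pA)\to H^q_I(A)\xrightarrow{p}H^q_I(A).
\]
By (ii), multiplication by $p$ is injective on $H^q_I(A)$. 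By (i), every element of $H^q_I(A)$ is killed by a power of $\mathfrak m$, hence by a power of $p\in\mathfrak m$. An $A$-module on which $p$ acts injectively and which is $p$-power torsion is zero, so $H^q_I(A)=0$.

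\textbf{Step 2: the support condition (i).} I would argue by induction on $d=\dim A$. Let $P\ne\mathfrak m$ be a prime containing $I$. If $\dim A/P\ge 2$, the hypothesis with $s=1$ already gives $H^q_{IA_P}(A_P)=0$ for $q\ge n-1$. So assume $\dim A/P=1$; we need $H^q_{IA_P}(A_P)=0$ for $q\ge n$.
\begin{itemize}
\item The hypotheses of the theorem pass to $A_P$. For a prime $Q\subseteq P$ we have $\dim A/Q\ge\dim A_P/QA_P+1$, so a condition on primes $QA_P$ with $\dim A_P/QA_P\ge s+1$ follows from the hypothesis on $A$. The big height of $IA_P$ is at most $c$, and $p$ stays a nonzerodivisor on $A_P/IA_P$ whenever $p\in P$.
\item If $p\in P$, then $A_P$ is again unramified regular of mixed characteristic, because $A_P/pA_P$ is a localization of the regular ring $A/pA$. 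The induction hypothesis then gives the vanishing. If the big height drops so that $n\le c'$ for the new big height $c'$, I would adjust $n$ accordingly.
\item If $p\notin P$, then $A_P$ contains $\mathbb Q$, and Faltings' equal-characteristic criterion (the analogue of this theorem in \cite{Faltings}) applies with the same hypotheses and gives the vanishing.
\end{itemize}

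\textbf{Step 3: the mod $p$ vanishing (ii) — main obstacle.} The ring $\bar A=A/pA$ is regular local of characteristic $p$ and dimension $d-1$.
\begin{itemize}
\item \emph{Big height.} Because $p$ is a nonzerodivisor on $A/I$, $p$ lies in no minimal prime of $I$, so every minimal prime of $I+pA$ has height at most $c+1$ in $A$. Hence $I\bar A$ has big height at most $c$.
\item \emph{Local hypotheses.} For primes $P\ni p$ with $\dim A/P\ge s+1$, localize the exact sequence
\[
H^q_I(A_P)\to H^q_I(A_P/pA_P)\to H^{q+1}_I(A_P).
\]
The hypothesis $H^q_{IA_P}(A_P)=0$ for $q\ge n-s$ then yields $H^q_{I}(\bar A_{\bar P})=0$ for $q\ge n-s$, with $\dim\bar A/\bar P=\dim A/P$.
\end{itemize}
I would then feed this into Faltings' criterion in characteristic $p$ for $\bar A$ to get $H^j_I(\bar A)=0$ for $j\ge n-1$.

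The delicate point is the index bookkeeping. The conclusion must improve by one (from $n$ to $n-1$), and this has to come from the dimension drop $\dim\bar A=d-1$ together with the shift $s\mapsto s$ in the local hypotheses. I would first try to match these parameters directly against Faltings' statement, with threshold $n-1$ and the same range $1\le s\le c-1$. If they do not line up, I would instead re-run Faltings' Mayer--Vietoris/induction argument on $\bar A$ with the shifted indices, possibly also using the case $p\notin P$ to handle the required range of $s$.
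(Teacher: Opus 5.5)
\textbf{There is a genuine gap: condition (ii) is false in general, so Step~3 cannot deliver it.} Your Step~2, which pushes $\Supp H^q_I(A)$ down to $\{\mathfrak m\}$ using Faltings when $p\notin P$ and induction when $p\in P$, is essentially the paper's reduction. Step~1 is also correct \emph{given} (ii). The failure is at the single index $j=n-1$.

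Once $H^n_I(A)=0$, the long exact sequence gives $H^{n-1}_I(A/pA)\cong H^{n-1}_I(A)/pH^{n-1}_I(A)$. Nothing in the hypotheses forces $H^{n-1}_I(A)$ to be $p$-divisible. Two counterexamples:
\begin{itemize}
\item Take $A=\mathbb{Z}_p[[x]]$, $I=(x)$, $c=1$, $n=2$. The hypotheses are vacuous and the conclusion holds, but $H^1_{(x)}(\mathbb{F}_p[[x]])\neq 0$.
\item The failure is not confined to $c=1$. Take $A=\mathbb{Z}_p[[x_1,\dots,x_4]]$, $I=(x_1,x_2)\cap(x_3,x_4)\cap(p-x_1,x_2-x_3)$ and $n=4$. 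Here $p$ is regular on $A/I$ and the hypotheses hold. Yet $H^3_I(A/pA)\neq 0$, because the punctured spectrum of $A/(I+pA)$ is disconnected.
\end{itemize}
What your local computation in Step~3 actually feeds into Faltings' criterion on $A/pA$ is threshold $n$, not $n-1$. That yields only $H^j_I(A/pA)=0$ for $j\geq n$, i.e.\ that $H^n_I(A)$ is $p$-divisible. This does not suffice: $E_A(k)$ is $p$-divisible and supported at $\mathfrak m$.

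\textbf{The missing idea is a genuinely mixed-characteristic argument for a module supported only at $\mathfrak m$.} The paper proceeds as follows.
\begin{itemize}
\item It uses descending induction on $n$, so $H^q_I(A)=0$ for $q>n$, and passes to $A=V[[X_2,\dots,X_e]]$.
\item It forms $D=A/I\ctens_V A$ with the diagonal ideal $\Delta$, which has $e-1$ generators.
\item It uses the spectral sequence $H^i_\Delta(D\dtens_A H^j_I(A))\Rightarrow H^{i+j}_\Delta(D)$ together with Skalit's dimension estimates for completed tensor products. This is where $p$ being regular on $A/I$ enters. The outcome is $H^{d-1}_\Delta(D\dtens_A H^n_I(A))=0$, where $d=\dim A/I$.
\item Zhou's theorem gives injective dimension at most $1$ for $H^n_I(A)$, and its Bass numbers are finite. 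So there is a minimal resolution $0\to H^n_I(A)\to E_A(k)^{\oplus a}\to E_A(k)^{\oplus b}\to 0$.
\item Since $H^{d-1}_\Delta(D\dtens_A E_A(k))$ is the top local cohomology of $D$, Matlis duality produces a surjection $\omega_D^{\oplus b}\to\omega_D^{\oplus a}$ with entries in $\mathfrak m D$. By Nakayama this forces $a=0$, so $H^n_I(A)=0$.
\end{itemize}
Reduction modulo $p$ alone cannot replace this step.
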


We now outline the strategy for proving our results.

\begin{enumerate}
    \item In equal characteristic, Faltings proved a criterion \cite[Satz 1]{Faltings} of the nature of Theorem \ref{thm:indcrit}, and used it as an inductive tool to establish an upper bound on $\cd(A,I).$ We generalize Faltings' criteria to the unramified mixed characteristic setting under the assumption that $p$ is a nonzerodivisor on $A/I$.\footnote{This assumption can be further relaxed: see Theorem \ref{thm:induction} for details.}
    
    In the proof, we reduce the problem to the case of a local cohomology module supported only at the maximal ideal. We handle this remaining case by introducing an auxiliary tensor construction over the base discrete valuation ring and analyzing local cohomology with respect to the diagonal ideal. This allows us to exploit precise dimension bounds and numerical inequalities to force the collapse of a natural spectral sequence, yielding a vanishing statement in the auxiliary setting.
    We then leverage a result of Zhou on the injective dimension of local cohomology modules in unramified mixed characteristic \cite[Theorem 5.1]{Zhou}, together with the finiteness of Bass numbers for such modules established in \cite{Lyu4, NB}.
    \item We split our proof of Theorem \ref{thm:maintheorem} into three cases. If $p$ is a nonzerodivisor on $A/I$, we can use the inductive criterion above to prove that the local cohomology modules $H_I^n(A)$ vanish for $n$ greater than the desired bound. If $p\in I,$ we reduce our analysis to $A/p$, a regular local ring of equal characteristic $p$, and employ the already known bound in equal characteristic.
    \item If $p$ is a zerodivisor on $I$, we build on the analysis of the preceding two cases to once again reduce to equal characteristic $p$. The guiding observation is that $H^n_I(A)$ is $p$-power torsion for $n$ exceeding the desired bound (Lemma \ref{lemma:p_zd_on_H_I^n}). 
    
    At this stage, the argument must be refined to control the behavior of the big height of the ideal $I(A/p).$ A systemic analysis reveals that all cases can be resolved except when $\bight(I)$ divides $\dim(A)-1$. In this remaining case, a key input is the structural description of the critical local cohomology module $H^q_J(A/p)$ \cite[Theorem 4.1]{Lyu1}, which allows us to reduce to the previously treated situation in which $p$ is a nonzerodivisor on $A/I$.

\end{enumerate}

The paper is organized as follows. In Section \ref{section: InductionTheorem}, we collect the principal properties and a number of subtle features of the big height of an ideal. We then prove the inductive criteria for the vanishing of local cohomology in the unramified mixed characteristic setting (Theorem \ref{thm:induction}). In Section \ref{section:cdbound}, we use Theorem \ref{thm:induction} to prove the upper bound on cohomological dimension (Theorem \ref{thm:cdbound}). We also address the sharpness of our bound (Example \ref{example:sharpness}) and illustrate how the bound can be used to compute the exact cohomological dimension of suitably chosen ideals. In Section \ref{section:questions}, we collect natural questions suggested by our results, and comment on current progress and related directions.

\section{The Induction Theorem}\label{section: InductionTheorem}

In this section, we prove a technical theorem (Theorem \ref{thm:induction}) which will serve as an essential inductive tool while proving our bounds on cohomological dimension. This theorem is an extension of \cite[Satz 1]{Faltings} to the unramified mixed characteristic setting. We begin by introducing the \textit{big height} of an ideal.


\begin{definition}
    Let $A$ be a Noetherian ring. Given an ideal $I$ of $A$, the \textit{big height} of $I$ is $$\bight(I):=\max\{\height(P)|\: P\text{ is a minimal prime of }I\}.$$ 
\end{definition}

\begin{remark} \label{rmk:bight}
    With notations as above,
    \begin{enumerate}
        \item If $I=I_1\cap I_2,$ then $\bight(I)\leq\max\{\bight(I_1),\bight(I_2)\}$. Equality may not hold: for instance, if $A=\CC[x,y,z]$, $I_1=(x)$ and $I_2=(x,y)\cap (z)$, then $\bight(I_1)=1$ and $\bight(I_2)=2$. However, $I=(xz)$ and $\bight(I)=1.$
        \item It is possible that $I\subseteq J$ and $\bight(I)>\bight(J).$ For instance, in $A=\CC[x,y,z]$, if $I=(x)\cap(y,z)$ and $J=(x)$, then $\bight(I)=2$ and $\bight(J)=1$.
        \item If $P$ is a prime ideal of $A$ containing $I$, then $\bight(IA_P)\leq\bight(I).$
        \item Observe that $\cd(A,I)\geq\bight(I).$ Indeed, let $c=\bight(I)$ and $P$ be a minimal prime of $I$ of height $c$. Then $IA_P$ is primary to the maximal ideal $PA_P$ of $A_P$ and $H^c_{IA_P}(A_P)\neq0$ by a result of Grothendieck \cite[Proposition 6.4]{Gr}; alternatively, see \cite[Theorem 9.3]{24h}. Hence, $H_I^c(A)\neq0$ and $\cd(A,I)\geq c.$
    \end{enumerate}
\end{remark}

\begin{lemma}\label{lemma:fflat}
    Let $A$ be a Noetherian ring, $I$ be an ideal of $A$ and $B$ be a faithfully flat extension of $A$. Then $\bight(I)=\bight(IB).$ 
\end{lemma}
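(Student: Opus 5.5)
We compare the minimal primes of $I$ and of $IB$ via the contraction map $\Spec(B)\to\Spec(A)$, using two standard facts about flat extensions. The first is going down: if $A\to B$ is flat and $Q\in\Spec(B)$ lies over $P=Q\cap A$, then $\height(Q)\geq\height(P)$. More precisely, $\height(Q)=\height(P)+\dim(B_Q/PB_Q)$, by the dimension formula for the flat local map $A_P\to B_Q$. The second is lying over: faithful flatness makes $\Spec(B)\to\Spec(A)$ surjective, and more precisely, for every $P\in\Spec(A)$ the fiber $B\otimes_A\kappa(P)$ is nonzero.

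\textbf{Step 1 ($\bight(IB)\leq\bight(I)$).} Let $Q$ be a minimal prime of $IB$ and set $P=Q\cap A$. Then $P\supseteq I$. We claim $P$ is a minimal prime of $I$. If $P'\subsetneq P$ with $P'\supseteq I$, then going down for the flat map $A\to B$ gives a prime $Q'\subsetneq Q$ with $Q'\cap A=P'$. This $Q'$ contains $P'B\supseteq IB$, contradicting the minimality of $Q$.

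Next, $Q$ is minimal over $PB$, since $IB\subseteq PB\subseteq Q$. Hence $QB_Q$ is minimal over $PB_Q$, so $\dim(B_Q/PB_Q)=0$, and the dimension formula gives $\height(Q)=\height(P)\leq\bight(I)$.

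\textbf{Step 2 ($\bight(IB)\geq\bight(I)$).} Let $P$ be a minimal prime of $I$ with $\height(P)=\bight(I)$. By faithful flatness, $PB_P\neq B_P$, so some prime of $B$ contracts to $P$. Choose $Q$ minimal among the primes of $B$ containing $PB$ and contracting to $P$; concretely, take a minimal prime of the fiber ring $B\otimes_A\kappa(P)$, i.e. of $B_P/PB_P$. Then $\dim(B_Q/PB_Q)=0$, so $\height(Q)=\height(P)$.

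It remains to show $Q$ is a minimal prime of $IB$. Suppose $Q'\subseteq Q$ is a prime containing $IB$. Then $I\subseteq Q'\cap A\subseteq P$, and minimality of $P$ forces $Q'\cap A=P$. Hence $Q'\supseteq PB$ and $Q'$ lies over $P$, so $Q'=Q$ by the choice of $Q$. Therefore $\bight(IB)\geq\height(Q)=\bight(I)$.

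\textbf{Main obstacle.} The only delicate point is justifying the height formula for the flat local homomorphism $A_P\to B_Q$, which is standard (Matsumura, Theorem 15.1). Note that faithful flatness is needed only in Step 2, to guarantee a nonempty fiber over $P$. Alternatively, one can avoid the full formula: the inequality $\height(Q)\geq\height(P)$ follows from going down, and $\height(Q)\leq\height(P)$ when $Q$ is minimal over $PB$ follows from $\dim B_Q\leq\dim A_P+\dim(B_Q/PB_Q)$.
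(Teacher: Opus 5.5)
Your proof is correct and follows essentially the same route as the paper. Going down shows that every minimal prime $Q$ of $IB$ contracts to a minimal prime $P$ of $I$ and is minimal over $PB$. Conversely, a prime minimal over $PB$ is minimal over $IB$; you take it minimal in the fiber over $P$, which neatly sidesteps the paper's implicit use of $Q\cap A=P$. Finally, the dimension formula for the flat local map $A_P\to B_Q$ gives $\height(Q)=\height(P)$, and your observation that faithful flatness is needed only for the reverse inequality (nonempty fiber over $P$) is accurate.
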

\begin{proof}
    We claim that the set of minimal primes of $IB$ is exactly the union of the sets of minimal primes of $PB$, where $P$ varies over the minimal primes of $I.$
    Let $Q$ be a minimal prime of $IB$, and let $P=Q\cap A$. If $P$ is not a minimal prime of $A$, then $P\supsetneq P'\supseteq I$, where $P'\in\Spec(A).$ By the going down theorem, there exists $Q'\in\Spec(B)$ such that $Q\supsetneq Q'$ and $Q'\cap A=P'$. Then $Q\supsetneq Q'\supseteq IB$, a contradiction. 

    Now, let $P$ be a minimal prime of $I$. We note that every minimal prime $Q$ of $PB$ is also a minimal prime of $IB$. Indeed, if not, $Q\supsetneq Q'\supseteq IB$ for some $Q'\in\Spec(B).$ However, this implies $P=Q\cap A\supseteq Q'\cap A\supseteq I$. As $P$ is a minimal prime of $I$, we have $Q'\cap A=P$. Then $Q\supsetneq Q'\supseteq PB$, a contradiction.

    It remains to be shown that if $Q$ is a minimal prime of $PB$ for $P\in \Spec(A)$, then $\height(P)=\height(Q).$ As $B_Q$ is a faithfully flat $A_P$-algebra, by \cite[Theorem A.11]{BrunsHerzog},
    \begin{align*}
        \dim(B_Q)&=\dim(A_P)+\dim(B_Q/PB_Q)\\
                 &=\dim(A_P).
    \end{align*}
    Therefore, $\height(P)=\height(Q).$
\end{proof}

We now prove three lemmas which will be useful in the proof of Theorem \ref{thm:induction}.

\begin{lemma}\label{lemma:colon}
    Let $I$ be a radical ideal of a Noetherian ring $A$ and $x \in A$. Let $P_1,\dots,P_t$ be the minimal primes of $I$. Suppose \begin{align*}
        I_1=\bigcap_{x\in P_i}P_i \; \text{ and } \;
        I_2=\bigcap_{x\notin P_i}P_i.
    \end{align*} Then $(I:x)=I_2$ and $(I:(I:x))=I_1.$
\end{lemma}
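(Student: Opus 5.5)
The plan is to compute both colon ideals directly from the primary decomposition $I=P_1\cap\dots\cap P_t$, using that colons commute with finite intersections and that a colon into a prime is easy to describe. For a prime $P$ and an ideal $J$ we have $(P:J)=A$ if $J\subseteq P$, and $(P:J)=P$ if $J\not\subseteq P$. Indeed, if $a\in(P:J)$, pick $j\in J\setminus P$; then $aj\in P$ forces $a\in P$. Since $(\bigcap_i P_i : J)=\bigcap_i (P_i:J)$, we get
\[(I:J)=\bigcap_{J\not\subseteq P_i} P_i .\]

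First I apply this with $J=(x)$. The primes with $x\in P_i$ contribute $A$, and those with $x\notin P_i$ contribute $P_i$. So $(I:x)=\bigcap_{x\notin P_i}P_i=I_2$, with the convention that an empty intersection is $A$. Next I apply the same formula with $J=I_2$, which gives
\[(I:I_2)=\bigcap_{I_2\not\subseteq P_i}P_i .\]
It remains to determine for which $i$ we have $I_2\subseteq P_i$.

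If $x\notin P_i$, then $P_i$ is one of the primes intersected to form $I_2$, so $I_2\subseteq P_i$. If $x\in P_i$, I claim $I_2\not\subseteq P_i$. Suppose instead that $I_2=\bigcap_{x\notin P_j}P_j\subseteq P_i$. Since $P_i$ is prime, it contains the product of these $P_j$, and hence contains some single $P_j$ with $x\notin P_j$. The minimal primes of $I$ are pairwise incomparable, so $P_j=P_i$, which contradicts $x\in P_i$. The degenerate case where no $P_j$ avoids $x$ gives $I_2=A\not\subseteq P_i$ directly. Therefore the indices with $I_2\not\subseteq P_i$ are exactly those with $x\in P_i$, and $(I:(I:x))=(I:I_2)=I_1$.

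The only step with real content is the prime-avoidance-type argument that $I_2\not\subseteq P_i$ whenever $x\in P_i$. It rests on the minimality, and hence pairwise incomparability, of the $P_i$, together with the fact that a prime containing a finite intersection of ideals contains one of them. Everything else is bookkeeping. The only other care needed is the empty-intersection convention, since $I_1$ or $I_2$ may equal $A$.
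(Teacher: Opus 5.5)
Your proof is correct and follows essentially the same route as the paper. Both arguments rest on the fact that $(P:J)=P$ for a prime $P$ with $J\not\subseteq P$, applied to each prime of $I=P_1\cap\dots\cap P_t$; the paper phrases the first half as ``$x$ is a nonzerodivisor on $A/I_2$.'' Your incomparability argument, that $I_2\not\subseteq P_i$ whenever $x\in P_i$, fills in the step the paper compresses into ``by hypothesis,'' and your treatment of empty intersections covers the degenerate cases the paper leaves implicit.
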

\begin{proof}
    As $x\in I_1$ and $I=I_1\cap I_2$, it is straightforward to see that $I_2\subseteq (I:x).$ Suppose $y\in (I:x)$. Then $xy\in I_2$, which implies $y\in I_2$ as $x$ is a nonzerodivisor on $A/I_2$. Hence, $(I:x)=I_2$.

    It remains to show $(I:I_2)=I_1$. Suppose $z\in (I:I_2).$ Then $zI_2\subseteq I_1$, which implies $zI_2\subseteq P$ for every minimal prime $P$ of $I_1$. However, $I_2\not\subseteq P$ by hypothesis and therefore $z\in P$ for every minimal prime $P$ of $I_1$. Hence, $z\in I_1$. The other containment is trivial.  
\end{proof}

\begin{lemma}\label{lemma:radicalsubset}
    Let $I$ and $J$ be ideals in a Noetherian ring $A$. Then $$\sqrt{(I:(I:J))}\subseteq \sqrt{(\sqrt{I}:(\sqrt{I}:J))}.$$
\end{lemma}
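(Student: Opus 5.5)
We prove the containment by showing the stronger statement that $(I:(I:J))\subseteq(\sqrt{I}:(\sqrt{I}:J))$ holds up to radicals, via a prime-by-prime argument. Write $\sqrt{I}=P_1\cap\dots\cap P_t$ with $P_i$ the minimal primes of $I$. The key steps are to compute $(\sqrt{I}:(\sqrt{I}:J))$ explicitly, and then to show that every element of $(I:(I:J))$ has a power lying in it.

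First we compute the right-hand side. Exactly as in the proof of Lemma \ref{lemma:colon}, we expect
\[
(\sqrt{I}:J)=\bigcap_{J\not\subseteq P_i}P_i=:I_2
\qquad\text{and}\qquad
(\sqrt{I}:I_2)=\bigcap_{J\subseteq P_i}P_i=:I_1 .
\]
For the first equality: if $yJ\subseteq \sqrt I$, then $yJ\subseteq P_i$ for each $i$. When $J\not\subseteq P_i$, primality forces $y\in P_i$. Conversely, elements of $I_2$ multiply $J$ into every $P_i$. The second equality follows because $I_2\not\subseteq P_j$ whenever $J\subseteq P_j$, by prime avoidance applied to the incomparable minimal primes. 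So the right-hand side is the radical ideal $I_1$, and it suffices to show $(I:(I:J))\subseteq P_j$ for every $j$ with $J\subseteq P_j$. (If there is no such $j$, then $I_1=A$ and there is nothing to prove.)

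Next we fix a minimal prime $P=P_j$ with $J\subseteq P$, and take $z\in(I:(I:J))$. We want an element $y\in(I:J)$ with $y\notin P$. Given such a $y$, we have $zy\in I\subseteq P$, hence $z\in P$. To produce $y$, localize at $P$: since $IA_P$ is $PA_P$-primary, there is $N$ with $P^NA_P\subseteq IA_P$. Let $Q$ be the intersection of the primary components of $I$ in a primary decomposition whose associated primes are not contained in $P$, and take $y\in Q\setminus P$, which exists by prime avoidance. This $y$ kills $I$-components away from $P$, but it need not send $J$ into the $P$-primary component. To repair this, we replace $y$ by $y\cdot w$ with $w\in P^N$? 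That fails, because $w$ then lies in $P$.

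This is the main obstacle: if $J\subseteq P$ but $J^k\not\subseteq$ the $P$-primary component $\mathfrak q$, then $(I:J)$ may lie inside $P$. The honest fix is to argue via embedded primes of $I$ contained in $P$, and perhaps to conclude only $z\in P$ when $P$ is minimal and $\mathfrak q$ is $P$-primary. Concretely, we pick $y\in\bigcap_{\mathfrak q_i\neq\mathfrak q}\mathfrak q_i\setminus P$. Then $yJ\subseteq \mathfrak q$ requires $J\subseteq(\mathfrak q:y)=\mathfrak q$, which may fail. So instead we localize the whole statement at $P$. We have $(I:(I:J))A_P=(IA_P:(IA_P:JA_P))$, since colons of finitely generated ideals commute with localization. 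In $A_P$, the ideal $IA_P$ is $\mathfrak m$-primary and $JA_P\subseteq\mathfrak m$. If this colon were the unit ideal, then $(IA_P:JA_P)\subseteq IA_P$. But by the socle argument this is impossible: choose $u\in(IA_P:\mathfrak m)\setminus IA_P$, which exists since $A_P/IA_P$ is Artinian and nonzero; then $uJA_P\subseteq IA_P$. Hence $(I:(I:J))A_P\subseteq PA_P$, so $(I:(I:J))\subseteq P$. Taking radicals and intersecting over all such $P$ gives the lemma.
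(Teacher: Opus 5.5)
Your proof is correct and takes a genuinely different route from the paper's. (The abandoned middle attempt, searching for some $y\in(I:J)\setminus P$, should simply be cut; the final localization paragraph is all you need.)

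\textbf{The paper's route.} The argument is elementwise. Given $x^n\in(I:(I:J))$ and $y\in(\sqrt I:J)$, choose $m$ with $y^mJ^m\subseteq I$. Then peel off one factor of $J$ at a time: $y^mJ^{m-1}\subseteq(I:J)$, so $x^ny^mJ^{m-1}\subseteq I$, so $x^ny^mJ^{m-2}\subseteq(I:J)$, and so on, until $x^{mn}y^m\in I$. Hence $x^ny\in\sqrt I$. This needs no primary decomposition, no localization and no socle argument. The only input is that a power of the finitely generated ideal $yJ$ lies in $I$.

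\textbf{Your route.} You first identify the right-hand side explicitly as $\bigcap_{J\subseteq P_i}P_i$. This generalizes Lemma~\ref{lemma:colon} from $J=(x)$ to an arbitrary ideal $J$. You then show $(I:(I:J))\subseteq P$ for each minimal prime $P\supseteq J$ by passing to $A_P$. There, a socle element of the nonzero Artinian ring $A_P/IA_P$ witnesses $(IA_P:JA_P)\neq IA_P$.

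\textbf{Comparison.} Your argument leans more heavily on Noetherianity: finitely many minimal primes, colons by finitely generated ideals commuting with localization, and nonzero socles of Artinian modules. In return it gives more information. It makes transparent that the right-hand side only sees the minimal primes of $I$ containing $J$, while the left-hand side can be trapped in embedded primes. That is exactly what happens in the paper's example $I=(x^2,xy)$, $J=(y)$, where $(I:(I:J))=(x,y)$ is the embedded prime and the right-hand side is $A$.

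\textbf{Minor points.} Both arguments actually prove the containment $(I:(I:J))\subseteq(\sqrt I:(\sqrt I:J))$ before taking radicals. Also, $I_2\not\subseteq P_j$ does not come from prime avoidance. It follows because a prime containing a finite intersection of ideals contains one of them, combined with the incomparability of minimal primes.
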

\begin{proof}
    Let $x\in\sqrt{(I:(I:J))}$ and $y\in(\sqrt{I}:J)$. There exist $n,m\in\mathbb{N}$ such that $x^n\in(I:(I:J))$ and $(yJ)^m\in I.$ Since $y^mJ^{m-1}\subseteq (I:J),$ we have $x^ny^mJ^{m-1}\subseteq I$. If $m>1$, we can conclude that $x^ny^mJ^{m-2}\subseteq I:J$ and hence, $x^{2n}y^mJ^{m-2}\subseteq I.$ Proceeding similarly, it follows that $$x^{mn}y^m\in I,$$ which implies $x^ny\in \sqrt{I}.$ Then $x^n\in (\sqrt{I}:(\sqrt{I}:J))$.
\end{proof}

\begin{remark}
    The above containment may be proper. For example, in $A=\CC[x,y]$, if $I=(x^2,xy)$ and $J=(y),$ then $\sqrt{(I:(I:J))}=(x,y)$ and $\sqrt{(\sqrt{I}:(\sqrt{I}:J))}=A.$
\end{remark}

\begin{lemma}\label{lemma:fflat0dimfiber}
Let $(A,\mathfrak{m})$ be a Noetherian local ring, and let $I$ be an ideal of $A$. Suppose that
$B$ is a faithfully flat local extension of $A$ with zero-dimensional fiber, that is, $\dim(B/\mathfrak{m}B)=0.$ If
$H^i_{IA_P}(M_P)=0$ for an $A$-module $M$ and for all $P\in\Spec(A)$ with
$\dim(A/P)\geq r$, then
\[
H^i_{IB_Q}(M\otimes_A B_Q)=0
\]
for all $Q\in\Spec(B)$ with $\dim(B/Q)\geq r$.
\end{lemma}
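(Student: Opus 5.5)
Fix $Q\in\Spec(B)$ with $\dim(B/Q)\geq r$ and set $P=Q\cap A$. The plan is to transfer the vanishing at $P$ to $Q$ by flat base change. Two ingredients are needed: a dimension comparison showing $\dim(A/P)\geq r$, so that the hypothesis gives $H^i_{IA_P}(M_P)=0$, and then the flat base change formula for local cohomology.

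For the dimension comparison, consider the induced map $A/P\to B/PB\to B/Q$. Since $B$ is flat over $A$, the map $A/P\to B/PB$ is flat and local. By the dimension formula \cite[Theorem A.11]{BrunsHerzog} used in Lemma \ref{lemma:fflat},
\[
\dim(B/PB)=\dim(A/P)+\dim(B/\mathfrak{m}B)=\dim(A/P),
\]
because $(B/PB)/\mathfrak{m}(B/PB)=B/\mathfrak{m}B$ has dimension $0$ by hypothesis. As $PB\subseteq Q$, we get
\[
\dim(A/P)=\dim(B/PB)\geq\dim(B/Q)\geq r.
\]
Hence $H^i_{IA_P}(M_P)=0$.

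For the base change, the map $A_P\to B_Q$ is flat; it is a localization of the flat map $A\to B$, and $PA_P$ maps into $QB_Q$. Local cohomology commutes with flat base change, for instance via the \v{C}ech complex on generators of $I$. This gives
\[
H^i_{IB_Q}(M\otimes_A B_Q)\cong H^i_{IA_P}(M_P)\otimes_{A_P}B_Q,
\]
and the right-hand side is zero.

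The only step that needs care is the dimension inequality. The zero-dimensional fiber hypothesis enters exactly there, and applying the dimension formula to the local homomorphism $A/P\to B/PB$ requires that this map be flat and local. Both properties are immediate from base-changing the flat local map $A\to B$ along $A\to A/P$.
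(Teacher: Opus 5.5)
Your proof is correct and follows essentially the same route as the paper. The paper likewise sets $P=Q\cap A$, uses the flat local map $A/P\to B/PB$, whose closed fiber is $B/\mathfrak{m}B$, to get $\dim(A/P)=\dim(B/PB)\geq\dim(B/Q)\geq r$, and then concludes by flat base change $H^i_{IB_Q}(M\otimes_A B_Q)\cong H^i_{IA_P}(M_P)\otimes_{A_P}B_Q=0$; you spell out the flatness, locality and closed-fiber checks that the paper leaves implicit.
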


\begin{proof}
Let $Q$ be a prime of $B$ with $\dim(B/Q)\geq r$, and set $P=Q\cap A$. Since $B/PB$ is faithfully flat over $A/P$ and $B/\mathfrak mB$ is $0$-dimensional, we have
\[
\dim(A/P)=\dim(B/PB)\geq \dim(B/Q)\geq r.
\]
Therefore $H^i_{IA_P}(M_P)=0$, and hence
\[
H^i_{IB_Q}(M\otimes_A B_Q)
= H^i_{IA_P}(M_P)\otimes_{A_P} B_Q = 0.
\qedhere\]
\end{proof}

\begin{definition}
    A regular local ring $(A,\mathfrak{m})$ of mixed characteristic $(0,p)$ is said to be \textit{unramified} if $p\in\mathfrak{m}\setminus\mathfrak{m}^2.$
\end{definition}

\noindent We are now ready to prove our inductive criterion for vanishing of local cohomology.

\begin{theorem}\label{thm:induction}
    Let $(A,\mathfrak{m},k)$ be an unramified regular local ring of mixed characteristic $(0,p)$, and let $I$ be an ideal of $A$. Suppose $\height(I)<\height(I:(I:p))$ (for example, this holds if $p$ is a nonzerodivisor on $A/I)$.\footnote{We use the convention that $\height(A)=\infty.$}
    
    Set $c=\bight(I)$. Let $n>c$ be an integer. Assume that for all integers $s$, with $1\leq s\leq c-1,$ and for all $q\geq n-s$, $H^q_{IA_P}(A_P)=0$ for all $P\in \Spec(A)$ such that $I\subseteq P$ and $\dim(A/P)\geq s+1.$ Then $H^q_I(A)=0$ for all $q\geq n.$
\end{theorem}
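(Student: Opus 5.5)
We follow Faltings' strategy for \cite[Satz 1]{Faltings}. The argument first reduces to a statement about local cohomology supported at the maximal ideal, and then treats that case by a diagonal construction over the coefficient ring.

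\emph{Step 1 (reductions).} Local cohomology commutes with completion, and $\widehat{A}$ is faithfully flat over $A$ with zero-dimensional closed fiber. By Lemma~\ref{lemma:fflat}, Lemma~\ref{lemma:fflat0dimfiber}, and flatness of the colon operations, we may assume $A$ is complete. Since it is unramified, $A\cong V[[x_2,\dots,x_d]]$ with $V$ a complete unramified DVR, and we may also enlarge the residue field to be separably closed. Lemma~\ref{lemma:radicalsubset} lets us replace $I$ by $\sqrt{I}$, because $\height(I:(I:p))$ can only go up. Lemma~\ref{lemma:colon} then writes $I=I_1\cap I_2$, where $I_1$ is the intersection of the minimal primes containing $p$ and $I_2$ the intersection of the remaining ones. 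The hypothesis $\height(I)<\height(I:(I:p))=\height(I_1)$ says that every minimal prime of $I$ of minimal height avoids $p$.

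We then induct on $d$. For every non-maximal $P\supseteq I$, the hypotheses pass to $A_P$, which is again unramified or of equal characteristic $0$ after inverting $p$; in the latter case the result is Faltings' theorem. Inducting on $q$ downward from $d$ would give that $H^q_I(A)$ is supported at $\mathfrak m$ for $q\ge n-1$, or more precisely that $H^q_{IA_P}(A_P)=0$ whenever $\dim A/P\ge1$. By Zhou's result \cite[Theorem 5.1]{Zhou} on injective dimension, together with the finiteness of Bass numbers \cite{Lyu4,NB}, each such $H^q_I(A)$ is then isomorphic to $E(k)^{\mu}$ with $\mu$ finite. It remains to show $\mu=0$, that is, $\operatorname{Hom}(k,H^q_I(A))=0$, or dually that $H^0_{\mathfrak m}H^q_I(A)=H^q_I(A)$ vanishes.

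\emph{Step 2 (diagonal construction, the main obstacle).} We form $B=A\ctens_V A$, a complete regular local ring of dimension $2d-1$, together with the ideal $\mathcal I=IB+I'B$ built from the two copies, and the diagonal ideal $\Delta$, which satisfies $B/\Delta\cong A$. The plan is to compute $H^\ast_{\Delta}(H^\ast_{\mathcal I}(B))$ by two routes.

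On one side, K\"unneth over $V$ expresses $H^\ast_{\mathcal I}(B)$ through $H^i_I(A)\ctens_V H^j_I(A)$ and $\ctor^V$-terms. The Tor terms are controlled by $p$-torsion, and the condition on $(I:(I:p))$ is meant to keep them harmless. Taking $\Delta$-cohomology of the top term $E(k)^\mu\ctens E(k)^\mu$ recovers $E(k)^{\mu^2}$ in a computable degree.

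On the other side, $\Delta+\mathcal I$ defines $I$ on the diagonal. So a Grothendieck spectral sequence $H^a_\Delta H^b_{\mathcal I}(B)\Rightarrow H^{a+b}_{\Delta+\mathcal I}(B)$ relates the computation to $H_I^\ast$ shifted by $\height\Delta=d-1$. Using $\bight(\mathcal I)\le 2c$, the inductive vanishing hypotheses on primes $P$ with $\dim A/P\ge s+1$, and Hartshorne--Lichtenbaum/dimension bounds, the numerical inequalities $n>c$ and $q\ge n-s$ should kill every term in the relevant total degree except the corner term. This forces the corner term both to vanish and to equal $E(k)^{\mu^2}$, which gives $\mu=0$.

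The hardest part will be the bookkeeping in mixed characteristic. The ring $B$ is not a power series ring over a field, $\Delta$ is generated by the $d-1$ elements $x_i\otimes1-1\otimes x_i$ only, and $p$-torsion produces extra Tor terms. I would first try the case where $p$ is a nonzerodivisor on $A/I$, where $\mathcal I$ is flat enough over $V$ that K\"unneth is clean. I would then show that the general hypothesis $\height(I)<\height(I:(I:p))$ only introduces components whose contributions sit in degrees low enough to be ignored.
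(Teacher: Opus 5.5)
Your outline has the right architecture: reduce to $H^n_I(A)$ supported at $\mathfrak m$, then use a diagonal construction over $V$ together with Zhou's theorem and finiteness of Bass numbers. However, two load-bearing steps fail as stated.

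\textbf{The endgame.} \cite[Theorem 5.1]{Zhou} only gives $\operatorname{injdim} H^n_I(A)\le\dim\Supp H^n_I(A)+1$. For a module supported at $\mathfrak m$ this means injective dimension at most $1$, not $H^n_I(A)\cong E(k)^{\mu}$. The paper therefore has to rule out a minimal resolution $0\to H^n_I(A)\to E(k)^{\oplus a}\to E(k)^{\oplus b}\to 0$ with $a\neq0$. It applies $R\Gamma_\Delta(D\dtens_A -)$ and uses that the corner term for $H^n_I(A)$ vanishes. This yields an injection $H^{\dim D}_{\mathfrak m_D}(D)^{\oplus a}\to H^{\dim D}_{\mathfrak m_D}(D)^{\oplus b}$ with matrix entries in $\mathfrak m$. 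Its Matlis dual $\omega_D^{\oplus b}\to\omega_D^{\oplus a}$ is then surjective with entries in $\mathfrak m$, contradicting Nakayama. Your plan to show ``$\mu=0$'' does not cover this case. Two smaller points:
\begin{itemize}
\item Descending induction gives $\Supp H^q_I(A)\subseteq\{\mathfrak m\}$ only for $q\ge n$, not $q\ge n-1$, since the $s=1$ hypothesis says nothing about primes with $\dim A/P=1$.
\item The condition $\height(I)<\height(I:(I:p))$ is not automatically inherited by $IA_P$ when $p\in P$. For example, take $I=(x)\cap(p,y)$ in $V[[x,y,z]]$ and $P=(p,y,z)$: then $IA_P=(p,y)A_P$ and the condition fails. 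It is inherited when $p$ is a nonzerodivisor on $A/I$.
\end{itemize}

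\textbf{The diagonal construction.} More seriously, Step 2 as designed cannot produce the vanishing. Put $e=\dim A$ (your $d$). Since $\Delta+IB+I'B=\Delta+IB$, and $B=A[[Z]]$ with $\Delta=(Z)$, the module $H^{e-1}_\Delta(B)$ is free over the first copy of $A$. Hence the abutment is $H^m_{\Delta+\mathcal I}(B)\cong\bigoplus H^{m-e+1}_I(A)$. Now look at the terms built from $H^n_I(A)=E^{\mu}$ in both factors. Because $E$ is $p$-divisible and $p$-torsion, $E\otimes_V E=0$. The only contribution is $\ctor^V_1(E^\mu,E^\mu)$ in $H^{2n-1}_{\mathcal I}(B)$, which is $\Delta$-torsion and hence concentrated at $a=0$. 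In that total degree the abutment is a sum of copies of $H^{2n-e}_I(A)$, and descending induction kills it only if $n>e$, which never occurs in a nontrivial case.

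\textbf{The missing idea.} Tensor $H^j_I(A)$ with $D=A/I\ctens_V A$ rather than with a second copy of $R\Gamma_I(A)$.
\begin{itemize}
\item Since $D/\Delta\cong A/I$, we have $ID\subseteq\Delta$. So $H^i_\Delta(D\dtens_A H^j_I(A))\Rightarrow H^{i+j}_\Delta(D)$, and the abutment vanishes in degrees $>e-1$ simply because $\Delta$ has $e-1$ generators.
\item The corner $E_2^{\dim(A/I)-1,\,n}$ lies in degree $\dim(A/I)-1+n>\dim(A/I)-1+c\ge e-1$. For $E(k)$ it equals $H^{\dim D}_{\mathfrak m_D}(D)\neq0$.
\item Incoming differentials die by descending induction.
\item Outgoing differentials to $E_2^{\dim(A/I)-1+r,\,n-r+1}$ die for $r>c$ by counting generators. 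For $2\le r\le c$ they die because every finitely generated $L\subseteq H^{n-r+1}_I(A)$ has $\dim L\le r-1$ (this is where the hypothesis on primes with $\dim A/P\ge s+1$ is used), and $\dim(A/I\ctens_V L)<\dim(A/I)+r-1$ by Skalit's estimates.
\item That last inequality is exactly where $\height(I)<\height(I:(I:p))$ enters: it forces $\dim A/I_1<\dim A/I$, while $A/I_2$ is $p$-torsion-free.
\end{itemize}
Your sketch pins down neither this role of the hypothesis nor the numerics; both are left at ``should kill every term''.
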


\begin{proof}
    We first observe that 
    \begin{align*}
        \height(\sqrt{I})=\height(I)&<\height(I:(I:p))\\
        &= \height\big(\sqrt{(I:(I:p))}\big)\\
        &\leq \height\bigg(\sqrt{(\sqrt{I}:(\sqrt{I}:p))}\bigg)\\
        &= \height\big(\sqrt{I}:(\sqrt{I}:p)\big)
    \end{align*} by  Lemma \ref{lemma:radicalsubset}. 
    As local cohomology and big height of an ideal only depend on its radical, we may reduce to the case where $I$ is radical. 
    By descending induction on $n$, we may assume $H^q_I(A)=0$ for all $q>n$. We need to prove $H^n_I(A)=0.$ By Cohen's structure theorem, Lemma \ref{lemma:fflat} and Lemma \ref{lemma:fflat0dimfiber}, as completion is faithfully flat with zero-dimensional fiber, we may assume that $A=V[[X_2,\dots,X_e]],$ where $V$ is a complete DVR with uniformizer $p$. We induce on $\dim(A/I).$ If $\dim(A/I)=0$, then $c=\dim(A)$ and $H^n_I(A)=0$ as $n>c.$

    Now, assume $d:=\dim(A/I)>0.$ We claim that $H^n_I(A)$, if nonzero, is supported only at the maximal ideal $\mathfrak{m}$ of $A$. Suppose not; let $P$ be minimal in $\Supp H_I^n(A).$ Note that $I\subseteq P.$ We have two cases: 

    \begin{enumerate}[label=(\roman*)]
        \item $p \notin P$.

        \noindent In this case, $A_P=(V[[X_2,\dots,X_e]])_P$ contains a field of characteristic $0$. Let $C=\widehat{A_P}$. We know that $c=\bight(I)\geq \bight(IC)$ by Remark \ref{rmk:bight} and Lemma \ref{lemma:fflat}. We use \cite[Satz 1]{Faltings} to prove that $H^n_{IC}(C)=0,$ which would imply $H^n_{IA_P}(A_P)=(H^n_I(A))_P=0,$ a contradiction to the choice of $P$.
        
        Let $s$ be an integer such that $1\leq s\leq \bight(IC)-1.$ Let $Q\in \Spec(C)$ such that $IC\subseteq Q$ and $\dim(C/Q)\geq s+1.$ Let $Q'=Q\cap A$ be the contraction of $Q$ to $A$. We have $$\dim(A/Q')>\dim(A_P/Q'A_P)\geq\dim(C/Q)\geq s+1.$$ As $\dim(A/Q')>s+1,$ by the hypothesis of our theorem, $H^q_{IA_{Q'}}(A_{Q'})=0$ and thus, $H^q_{IC_Q}(C_Q)=0$ for all $q\geq n-s$, since $A_{Q'}\to C_Q$ is faithfully flat. By \cite[Satz 1]{Faltings}, $H^q_{IC}(C)=0$ for all $q\geq n$.

        \item $p\in P$.

        We define $C=\widehat{A_P}$ as above and conclude again that for all integers $s$ with $1\leq s\leq \bight(IC)-1$, and for all $q\geq n-s$, we have $H^q_{IC_Q}(C_Q)=0$ for all $Q\in \Spec(C)$ such that $IC\subseteq Q$ and $\dim(C/Q)\geq s+1.$ Note that in this case, $C$ is still an unramified regular local ring of mixed characteristic $(0,p)$ because $P^{(2)}\subseteq \mathfrak{m}^2$ \cite[Theorem 2.11]{SymPowers}. Further, $\height(IC)=\height(I)$ and $\height(IC:(IC:p))=\height(I:(I:p))$ as $C$ is a faithfully flat $A$-algebra. As $\dim(C/IC)<\dim(A/I)$, we conclude by the induction hypothesis that $H^n_{IC}(C)=0$, which is a contradiction as above.
    \end{enumerate}
    Hence, $\Supp H^n_I(A)\subseteq \{m\}.$
    
   Let $P_1,\dots,P_t$ be the minimal primes of $I$. Suppose \begin{align*}
        I_1=\bigcap_{p\in P_i}P_i \; \text{ and } \;
        I_2=\bigcap_{p\notin P_i}P_i.
    \end{align*} By Lemma \ref{lemma:colon} and our hypothesis, $\height(I)<\height(I_1).$

    
    Let $D:=A/I\ctens_V A\cong V[[X_2,\dots,X_e,Y_2,\dots,Y_e]]/I(X_2,\dots X_e)$ and $\Delta=(X_2-Y_2,\dots,X_e-Y_e)\subseteq D$. Note that $0\to I\ctens_V A\to A\ctens_V A\to 0$ is a flat resolution of $D$ over $A$. We have a spectral sequence for composition of derived functors $$E_2^{i,j}=h^i(R\Gamma_\Delta(D\dtens_A h^j(R\Gamma_I(A)))).$$ We have that $E_2^{d-1,\:n}\implies E_\infty^{d-1,\:n}$, which is a subquotient in a filtration of
    \begin{align*}
        h^{d-1+n}(R\Gamma_\Delta(D\dtens_A R\Gamma_I(A)))=h^{d-1+n}(R\Gamma_{\Delta+ID}(D)).
    \end{align*} Observe that $ID\subseteq\Delta$ as $D/\Delta\cong A/I$, and $d-1+n>d-1+c\geq e-1,$ which is the number of generators of $\Delta$. Hence, 
    \begin{align*}
        h^{d-1+n}(R\Gamma_{\Delta+ID}(D))&=H^{d-1+n}_{\Delta+ID}(D)\\&=H^{d-1+n}_\Delta(D)\\&=0.
    \end{align*} 
    Therefore, $E_\infty^{d-1,\:n}=0.$ We now show that the incoming differentials to and outgoing differentials from $E_2^{d-1,\:n}$ are $0$, thus establishing that $E_2^{d-1,\:n}=0.$ The incoming differentials are from $E_2^{d-1-r,\:n+r-1}=H_\Delta^{d-1-r}(D\dtens_A H_I^{n+r-1}(A))$ for $r>1$. As $n+r-1>n$, by the induction hypothesis, $H_I^{n+r-1}(A)=0$ and thus $E_2^{d-1-r,\:n+r-1}=0$. 

    The outgoing differentials are to $E_2^{d-1+r,\: n-r+1}$ for $r>1$. If $r>c$, $d-1+r>e-1$ and $E_2^{d-1+r,\:n-r+1}=H_\Delta^{d-1+r}(D\dtens_A H_I^{n-r+1}(A))=0$ by considering a spectral sequence computing $D\dtens_A H_I^{n-r+1}(A)$ and noting that $\Delta$ is generated by $e-1$ elements. Suppose $r\leq c.$ Let $s=r-1$ and $L$ be a finitely generated submodule of $H_I^{n-r+1}(A)$. Note that $1\leq s\leq c-1.$ By the hypothesis of the theorem, $\dim(L)\leq s.$ Then, by \cite[Lemma 2.1(a)]{Skalit}, $$\dim(A/(I_1+I_2)\ctens_V L)\leq\dim(A/I_1\ctens_V L)\leq \dim(A/I_1)+\dim(L)<d+s.$$ Here, we use our assumption that $\height(I)<\height(I_1).$ On the other hand, by \cite[Lemma 2.1(b)]{Skalit}, as $A/I_2$ is $p$-torsion-free, $$\dim(A/I_2\ctens_VL)=\dim(A/I_2)+\dim(L)-1=d+s-1.$$ 
    Consider the exact sequence $$\ctor_1^V(A/(I_1+I_2),L)\to A/I\ctens_V L\to (A/I_1\oplus A/I_2)\ctens_VL\to (A/(I_1+I_2)\ctens_VL)\to0.$$
    As $\dim(\ctor_1^V(A/(I_1+I_2),L))\leq\dim(A/(I_1+I_2)\ctens_VL)$, we have $$\dim(A/I\ctens_VL)\leq\max\{\dim(A/I_1\ctens_VL),\dim(A/I_2\ctens_VL)\}<d+s.$$ Further, observe that $\dim(\ctor_1^V(A/I,L))\leq \dim(A/I\ctens_VL)<d+s.$ Note that as a complex, $D\dtens_A L$ is represented by $$0\to I\ctens_V L\to A\ctens_V L\to 0,$$ with cohomologies $A/I\ctens_VL$ and $\ctor_1^V(A/I,L)$. Therefore, $H_\Delta^{d+s}(D\dtens_A L)=0$. As tensor products and local cohomology commute with direct limits, $E_2^{d-1+r,\: n+r-1}=0.$ We have thus proved $$E_2^{d-1,\:n}=H_\Delta^{d-1}(D\dtens_AH_I^n(A))=0.$$

   It follows from \cite[Theorem 5.1]{Zhou} that the injective dimension of $H_I^n(A)$ as an $A$-module is at most 1 and by \cite[Theorem 1]{Lyu4} or \cite[Theorem 5.1]{NB}, the Bass numbers of $H_I^n(A)$ are finite. We observe that 
    \begin{align*}
        H_\Delta^{d-1}(D\dtens_A E_A(k))&\cong H_\Delta^{d-1}(D\dtens_A H_\mathfrak{m}^e (A))\\
        &\cong h^{d-1}(R\Gamma_\Delta(D\dtens_A R\Gamma_\mathfrak{m}(A)[e]))\\
        &\cong h^{d-1}(R\Gamma_\Delta(R\Gamma_{\mathfrak{m}D}(D)[e]))\\
        &\cong h^{d-1+e}(R\Gamma_{\Delta+\mathfrak{m}D}(D))\\
        &\cong H_{\Delta +\mathfrak{m}D}^{d+e-1}(D),
    \end{align*}
    which is nonzero because $\Delta D+\mathfrak{m}D$ is the maximal ideal of $D$ and $\dim(D)=\dim(A/I)+\dim(A)-1=d+e-1$. As $\Supp(H_I^n(A))\subseteq\{m\}$, if $H_I^n(A)$ is injective, $H_I^n(A)\cong E_A(k)^{\oplus a}$, a contradiction to $H_\Delta^{d-1}(D\dtens_AH_I^n(A))=0$ by the above calculation. Hence, $H_I^n(A)$ must have injective dimension $1$.

    Suppose $$0\to H_I^n(A)\to E_A(k)^{\oplus a}\to E_A(k)^{\oplus b}\to 0$$ is a minimal injective resolution of $H_I^n(A).$ On applying the derived functor $R\Gamma_\Delta(D\dtens_A \underline{\hspace{3mm}})$, we get the following snippet of a long exact sequence: $$h^{d-1}(R\Gamma_\Delta(D\dtens_A H_I^n(A)))\to h^{d-1}(R\Gamma_\Delta(D\dtens_A E_A(k)^{\oplus a}))\to h^{d-1}(R\Gamma_\Delta(D\dtens_A E_A(k)^{\oplus b})).$$ Since $H_\Delta^{d-1}(D\dtens_A H_I^n(A))=0$ and $H_\Delta^{d-1}(D\dtens_A E_A(k))\cong H_{\Delta+\mathfrak{m}D}^{d+e-1}(D),$ we obtain an exact sequence $$0\to H_{\Delta+\mathfrak{m}D}^{e+d-1}(D)^{\oplus a}\to H_{\Delta+\mathfrak{m}D}^{e+d-1}(D)^{\oplus b}$$ with the entries of the map in the ideal $\mathfrak{m}D$ since we start with a minimal resolution. Taking the Matlis dual, we get a surjection $$\omega_D^{\oplus b}\onto \omega_D^{\oplus a},$$ with all entries in the ideal $\mathfrak{m}D$, a contradiction to Nakayama's lemma.
\end{proof}

\section{Bounds on cohomological dimension}\label{section:cdbound}

In this section, we prove our claimed upper bound on cohomological dimension, using Theorem \ref{thm:induction}, \cite[Theorem 4.1]{Lyu1} and a variety of well-known cohomological tools. We begin by laying the groundwork for the proof in the following key lemmas.

\begin{lemma}\label{lemma:radical(I+p)}
    Let $I$ be an ideal of a Noetherian ring $A$. Suppose $x\in A$ is a zerodivisor on $A/I$ and $I=I_1\cap I_2$ such that $x\in I_1$ and $x\notin I_2.$ Then \begin{enumerate}
        \item $\sqrt{I+(x)}=\sqrt{I_1}\cap\sqrt{I_2+(x)}.$
        \item Suppose $x$ is a nonzerodivisor on $A/\sqrt{I_2}$. Then $$\bight(I+(x))\leq\max\{\bight(I_1),\bight(I_2)+1\}.$$
    \end{enumerate}
\end{lemma}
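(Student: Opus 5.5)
For part (1), I will compare both sides through the minimal primes containing them, using only $\sqrt{I}=\sqrt{I_1}\cap\sqrt{I_2}$. Since $x\in I_1$, we have $I_1\supseteq I+(x)$, so $\sqrt{I_1}\supseteq\sqrt{I+(x)}$. Similarly $I_2+(x)\supseteq I+(x)$ gives $\sqrt{I_2+(x)}\supseteq\sqrt{I+(x)}$. This proves $\sqrt{I+(x)}\subseteq\sqrt{I_1}\cap\sqrt{I_2+(x)}$.

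For the reverse inclusion I will check that every prime $Q\supseteq I+(x)$ contains $\sqrt{I_1}\cap\sqrt{I_2+(x)}$. Since $Q\supseteq I=I_1\cap I_2$ and $Q$ is prime, either $Q\supseteq I_1$ or $Q\supseteq I_2$. In the second case $Q\supseteq I_2+(x)$, because $x\in Q$. In either case $Q$ contains the intersection. Taking the intersection over all such $Q$ gives $\sqrt{I_1}\cap\sqrt{I_2+(x)}\subseteq\sqrt{I+(x)}$. Part (1) uses neither the zerodivisor hypothesis nor $x\notin I_2$.

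For part (2), the plan is to bound the height of each minimal prime $Q$ of $I+(x)$. By (1), $Q$ is minimal over $\sqrt{I_1}\cap\sqrt{I_2+(x)}$, so it is a minimal prime of $\sqrt{I_1}$ or of $\sqrt{I_2+(x)}$. In the first case $\height(Q)\leq\bight(I_1)$. In the second case $Q$ is minimal over $\mathfrak p+(x)$ for some prime $\mathfrak p\supseteq I_2$ inside $Q$, and I may take $\mathfrak p$ to be a minimal prime of $I_2$. Krull's principal ideal theorem in $A/\mathfrak p$ then gives $\height(Q/\mathfrak p)\leq 1$.

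The main obstacle is turning this into $\height(Q)\leq\height(\mathfrak p)+1$, since height is not additive in an arbitrary Noetherian ring. I first want to show that $x\notin\mathfrak p$, so that $Q\neq\mathfrak p$ and $\height(Q/\mathfrak p)$ is exactly $1$. Because $x$ is a nonzerodivisor on $A/\sqrt{I_2}$, it lies in no associated prime of $A/\sqrt{I_2}$, and these are exactly the minimal primes of $I_2$. So $x\notin\mathfrak p$, and $Q$ is a minimal prime of $\mathfrak p+(x)$ with $\height(Q/\mathfrak p)=1$.

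To pass to $\height(Q)$, I will use that in every ring where the lemma is applied ($A$ regular local, or a localization or quotient of one) the ring is catenary and equidimensional locally. This gives $\height(Q)=\height(\mathfrak p)+\height(Q/\mathfrak p)\leq\bight(I_2)+1$. In a general Noetherian ring this step can fail, so I will state this use of catenarity explicitly, or note that the lemma is only applied to regular rings. Combining the two cases gives $\bight(I+(x))\leq\max\{\bight(I_1),\bight(I_2)+1\}$.
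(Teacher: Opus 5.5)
Your argument is correct and has the same skeleton as the paper's. Part (1) goes by double inclusion: you check the reverse inclusion prime by prime, while the paper writes $\sqrt{I_1}\cap\sqrt{I_2+(x)}=\sqrt{I_1(I_2+(x))}\subseteq\sqrt{I_1I_2+(x)}\subseteq\sqrt{I+(x)}$, and either works. Part (2) passes from a minimal prime of $I_2+(x)$ to a minimal prime $\mathfrak p$ of $I_2$ beneath it, then invokes (1) together with Remark~\ref{rmk:bight}(1).

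The real difference is the height step, and there your caution is justified. The paper simply asserts $\height(\mathfrak p+(x))=\bight(\mathfrak p+(x))=\height(\mathfrak p)+1$ for $x\notin\mathfrak p$, with no hypothesis on $A$. In fact part (2) is false for arbitrary Noetherian rings. Take $A=k[a,b,c,t]/(ab,ac)$, $I_1=(a,t-1)$, $I_2=(b,c)$ and $x=a$. Then:
\begin{itemize}
\item $x\in I_1$ and $x\notin I_2$;
\item $x$ is a nonzerodivisor on $A/I_2\cong k[a,t]$;
\item $x$ is a zerodivisor on $A/I$, because $ab=0$ while $b\notin I_1$.
\end{itemize}
By (1), the minimal primes of $I+(x)$ are $(a,t-1)$ and $(a,b,c)$. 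Their heights in $A$ are $1$ and $2$, the latter via the chain $(a)\subsetneq(a,b)\subsetneq(a,b,c)$. Meanwhile $\bight(I_1)=1$ and $\bight(I_2)=0$. Hence $\bight(I+(x))=2>1=\max\{\bight(I_1),\bight(I_2)+1\}$. Replacing $t-1$ by $t$ and localizing at $(a,b,c,t)$ gives a local example.

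So restricting to $A$ regular is a necessary correction, not a convenience. It covers every use of the lemma in the paper: Case 3 of Theorem~\ref{thm:cdbound}, where $A$ is a complete regular local ring, and $B=V[[x_1,\dots,x_d]]$ in the final remark. In that setting $A_Q$ is a Cohen--Macaulay local domain, so $\height Q=\height\mathfrak p+\height(Q/\mathfrak p)\le\bight(I_2)+1$.

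Two small adjustments:
\begin{itemize}
\item Drop ``or a quotient of one'' from your justification. Quotients of regular local rings need not be equidimensional, and the example above is such a quotient.
\item Once the dimension formula is available, the hypothesis that $x$ is a nonzerodivisor on $A/\sqrt{I_2}$ is not needed for the inequality. If $x\in\mathfrak p$, then $Q=\mathfrak p$ and $\height Q\le\bight(I_2)$ directly. That hypothesis only matters for the paper's sharper claim that every minimal prime of $\mathfrak p+(x)$ has height exactly $\height\mathfrak p+1$.
\end{itemize}
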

\begin{proof} We verify each assertion in turn.
    \begin{enumerate}
        \item As $I+(x)=I_1\cap I_2 +(x)\subseteq I_1\cap(I_2+(x))$, we have $\sqrt{I+(x)}\subseteq\sqrt{I_1}\cap\sqrt{I_2+(x)}.$ On the other hand, 
        \begin{align*}
            \sqrt{I_1}\cap\sqrt{I_2+(x)}&=\sqrt{(I_1(I_2+(x))}\\
            &\subseteq\sqrt{I_1I_2+(x)}\\
            &\subseteq \sqrt{I+(x)}.
        \end{align*}
        
        \item For any minimal prime $Q$ of $I_2$, as $x\notin Q$, $\height(Q+(x))=\bight(Q+(x))=\height(Q)+1.$ This implies $\bight(I_2+(x))\leq\bight(I_2)+1$. The conclusion follows from (1) and Remark \ref{rmk:bight}.\qedhere
    \end{enumerate}
\end{proof}

\begin{lemma}\label{lemma:p_zd_on_H_I^n}
    Let $I$, $A$, $x$, $I_1$ and $I_2$ be as in Lemma \ref{lemma:radical(I+p)}, $M$ be an $A$-module and $n>\max(\cd(M,I_1),\cd(M,I_2))$. Then $H_I^n(M)$ is $x$-power torsion, that is, every element of $H^n_I(M)$ is annihilated by some power of $x$.
\end{lemma}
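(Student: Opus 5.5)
Mayer--Vietoris is the natural tool here. Since $I=I_1\cap I_2$, the Mayer--Vietoris sequence for local cohomology gives, for each $n$, an exact sequence
\[
H^n_{I_1}(M)\oplus H^n_{I_2}(M)\to H^n_{I}(M)\to H^{n+1}_{I_1+I_2}(M)\to H^{n+1}_{I_1}(M)\oplus H^{n+1}_{I_2}(M).
\]
I interpret $\cd(M,J)$ as the largest index with $H^j_J(M)\neq 0$. Since $n>\max(\cd(M,I_1),\cd(M,I_2))$, the outer terms in degrees $n$ and $n+1$ vanish. This gives an isomorphism
\[
H^n_I(M)\cong H^{n+1}_{I_1+I_2}(M).
\]
So it suffices to show that $H^{n+1}_{I_1+I_2}(M)$ is $x$-power torsion.

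The key observation is that every local cohomology module $H^j_J(M)$ is $J$-torsion: each element is annihilated by some power of $J$. Here $x\in I_1\subseteq I_1+I_2$, so every element of $H^{n+1}_{I_1+I_2}(M)$ is killed by a power of $x$. The isomorphism above is $A$-linear, so this property transfers to $H^n_I(M)$, and we are done.

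I do not expect a serious obstacle. The only points to check are the following.
\begin{enumerate}
\item Mayer--Vietoris holds for arbitrary (not necessarily finitely generated) $M$ over a Noetherian ring. It does, via the \v{C}ech complex description or the standard $\Gamma$-functor argument, using $\sqrt{I_1\cap I_2}=\sqrt{I}$.
\item The vanishing hypotheses are used for both $n$ and $n+1$. This is automatic, since $n+1>n$ exceeds both cohomological dimensions.
\end{enumerate}
The hypothesis that $x$ is a zerodivisor with $x\notin I_2$ is not needed for this argument. It merely records the setup of Lemma \ref{lemma:radical(I+p)}.
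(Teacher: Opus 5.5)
Your proof is correct and follows the paper's argument exactly: Mayer--Vietoris with the outer terms killed by the cohomological dimension hypothesis gives $H^n_I(M)\cong H^{n+1}_{I_1+I_2}(M)$, and this module is $x$-power torsion because $x\in I_1+I_2$. The paper writes the index as $n_1+n_2$, which is a typo for your $n+1$.
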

\begin{proof}
    By the Mayer-Vietoris sequence on local cohomology, $$H_{I_1}^n(M)\oplus H_{I_2}^n(M)\to H_I^n(M)\to H_{I_1+I_2}^{n+1}(M)\to H_{I_1}^{n+1}(M)\oplus H_{I_2}^{n+1}(M)$$ is exact. As $n>\max(\cd(M,I_1),\cd(M,I_2))$, $H_I^n(M)\cong H_{I_1+I_2}^{n_1+n_2}(M)$. This concludes the proof as $x\in I_1+I_2$.  
\end{proof}

We will crucially use two more results from the literature and we state them for the sake of easy reference.

\begin{theorem}[{\cite[Chapter V, Theorem 3]{Serre}}]\label{thm:Serre}
    Let $A$ be a regular ring, and $P$ and $Q$ be prime ideals of $A$. Then $\height(P+Q)\leq\height(P)+\height(Q).$
\end{theorem}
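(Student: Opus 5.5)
This result is \cite[Chapter V, Theorem 3]{Serre}; the plan below reconstructs Serre's argument. The approach is to localize, complete, and then use reduction to the diagonal.

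\emph{Reduction to a local inequality.} Let $R$ be a minimal prime of $P+Q$ with $\height(R)=\height(P+Q)$. Replacing $A$ by $A_R$ changes neither $\height(P)$ nor $\height(Q)$. So we may assume that $(A,\mathfrak m)$ is a regular local ring and that $P+Q$ is $\mathfrak m$-primary. Since $A$ is a catenary local domain, $\height(P)=\dim A-\dim(A/P)$, and similarly for $Q$. The claim $\height(P+Q)=\dim A\le \height(P)+\height(Q)$ is therefore equivalent to
\[
\dim(A/P)+\dim(A/Q)\le \dim A \qquad\text{whenever } \ell(A/P\otimes_A A/Q)<\infty .
\]
I will prove this for arbitrary finitely generated modules $M,N$ with $M\otimes_A N$ of finite length. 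Passing to $\widehat A$ is faithfully flat, preserves $\dim M$, $\dim N$ and $\dim A$, and keeps $\widehat M\otimes\widehat N$ of finite length. So we may also assume $A$ is complete. Working with modules rather than primes avoids the fact that $P\widehat A$ need not be prime.

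\emph{Unramified and equicharacteristic cases.} If $A\cong k[[X_1,\dots,X_e]]$, then $M\otimes_A N\cong (M\ctens_k N)/\Delta(M\ctens_k N)$. Here $\Delta=(X_i-Y_i)$ is generated by $e$ elements in $k[[X,Y]]$, and $\dim(M\ctens_k N)=\dim M+\dim N$. Krull's principal ideal theorem then gives
\[
0=\dim(M\otimes_A N)\ge \dim M+\dim N-e .
\]
If instead $A\cong V[[X_2,\dots,X_e]]$ is unramified, run the same argument over $V$, exactly as with $D$ and $\Delta$ in the proof of Theorem \ref{thm:induction}. The diagonal now has $e-1$ generators. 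By \cite[Lemma 2.1]{Skalit}, $\dim(M\ctens_V N)\ge \dim M+\dim N-1$, with equality in the $p$-torsion-free case; the inequality is what we need, and it holds in general after filtering $M$ and $N$ by $p$-torsion-free and $p$-torsion pieces. This gives $\dim M+\dim N\le e$.

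\emph{Ramified case.} Here I expect the main obstacle. By Cohen's structure theorem write $A=B/(f)$, with $B$ an unramified complete regular local ring of dimension $e+1$ and $f\in\mathfrak m_B\setminus\mathfrak m_B^2$. Applying the unramified case to $M,N$ viewed as $B$-modules loses one, giving only $\dim M+\dim N\le e+1$. To recover that one, I would follow Serre and pass to Euler characteristics. The vanishing theorem in the unramified case, $\chi^B(M,N)=0$ when $\dim M+\dim N<\dim B$, combined with the change-of-rings relation between $\mathrm{Tor}^A$ and $\mathrm{Tor}^B$ for the hypersurface $A=B/(f)$, shows that $\chi^A(M,N)$ is a difference of $B$-intersection multiplicities. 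One then argues as Serre does: if $\dim M+\dim N>e$, then $\chi^A(M,N)=0$. But for $M=A/P$, $N=A/Q$ in that situation one also derives a nonzero Tor, and this yields a contradiction. This is the delicate step, and since the paper uses the statement only as a black box, in practice I would cite Serre at this point.
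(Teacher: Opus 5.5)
\textbf{There is a genuine gap: the ramified case.} Your reduction to the local inequality $\dim M+\dim N\le\dim A$ for complete $A$ is sound, and so are the equicharacteristic and unramified cases by reduction to the diagonal. The bound $\dim(M\ctens_V N)\ge\dim M+\dim N-1$ is true. The clean reason is that $\Supp(M\ctens_V N)$ depends only on the annihilators of $M$ and $N$. So you may pass to $A/\mathfrak p\ctens_V A/\mathfrak q$ with $\dim A/\mathfrak p=\dim M$ and $\dim A/\mathfrak q=\dim N$, and check the three cases according to where $p$ lies. The paper gives no proof of its own; it only cites Serre. The question is therefore whether your reconstruction closes, and at the ramified step it does not.

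For $A=B/(f)$ and $A$-modules $M,N$, the change-of-rings sequence
\[
\cdots\to\operatorname{Tor}^A_{i-1}(M,N)\to\operatorname{Tor}^B_i(M,N)\to\operatorname{Tor}^A_i(M,N)\to\operatorname{Tor}^A_{i-2}(M,N)\to\operatorname{Tor}^B_{i-1}(M,N)\to\cdots
\]
does not express $\chi^A(M,N)$ through $B$-multiplicities. Taking alternating sums of lengths, it says exactly that $\chi^B(M,N)=0$ for every pair of $A$-modules with $M\otimes_A N$ of finite length. Paired with the vanishing half of the unramified theorem ($\chi^B=0$ when $\dim M+\dim N<\dim B$), this gives nothing. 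Your closing step is also not an argument: a nonzero $\operatorname{Tor}$ is compatible with a vanishing Euler characteristic, as in $\chi^{k[[x,y]]}(k,k)=1-2+1=0$.

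The missing ingredient is the \emph{positivity} half of the unramified theorem: if $\dim M+\dim N=\dim B$, then $\chi^B(M,N)>0$. This is where reduction to the diagonal is really used.
\begin{itemize}
\item If, say, $M$ is $p$-torsion-free, then $\operatorname{Tor}^B_i(M,N)$ is the Koszul homology of the $e$ diagonal elements on $M\ctens_V N$. When $\dim M+\dim N=e+1$, one has $\dim(M\ctens_V N)=e$. So these elements form a system of parameters, and $\chi^B(M,N)$ is the (positive) Samuel multiplicity of $M\ctens_V N$ with respect to them.
\item If both modules are killed by $p$, the equicharacteristic case over $B/pB$ already gives $\dim M+\dim N\le e$.
\item The general case follows from prime filtrations, additivity, and nonnegativity of $\chi^B$.
\end{itemize}
With this, the ramified case closes at once. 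The unramified bound gives $\dim M+\dim N\le e+1$. Equality would force $\chi^B(M,N)>0$, contradicting $\chi^B(M,N)=0$. Hence $\dim M+\dim N\le e$, which is Serre's argument.
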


\begin{theorem}[{\cite[Theorem 4.1]{Lyu1}}]\label{thm:Lyu}
    Let $A$ be a $d$-dimensional regular local ring in equal characteristic with separably closed residue field, and $I$ be an ideal of $A$ with minimal primes $P_1,\dots,P_t.$ Assume that $d>1$ and $\bight(I)=c>0.$ Let $u=\floor*{\frac{d-2}{c}}$ and $v=d-u$. Then $H_I^{v}(A)$ is isomorphic to the cokernel of the map  
    $$ \Phi_I:\bigoplus_{j_0<\cdots<j_{u+1}}H^{d}_{P_{j_0}+\dots+P_{j_{u+1}}}(A)\to \bigoplus_{j_0<\dots<j_{u}}H^{d}_{P_{j_0}+\dots+P_{j_{u}}}(A)$$ 
    that sends every $x\in \bigoplus_{j_0<\cdots<j_{u+1}}H^{d}_{P_{j_0}+\dots+P_{j_u}}(A)$ to $\oplus_{s=0}^{s=u}(-1)^sh_s(x)$ where $$h_s:H^{d}_{P_{j_0}+\dots+P_{j_{u+1}}}(A)\to H^{d}_{P_{j_0}+\dots+\widehat{P_{j_s}}+\dots+P_{j_{u+1}}}(A)$$ ($\widehat{P_{j_s}}$ means that $P_{j_s}$ has been omitted) is the natural map induced by the containment $$P_{j_0}+\dots+\widehat{P_{j_s}}+\dots+P_{j_{u+1}}\subseteq P_{j_0}+\dots+P_{j_{u+1}}.$$
\end{theorem}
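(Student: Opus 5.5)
The statement is quoted from \cite[Theorem 4.1]{Lyu1}, so the plan is to recover it from the Mayer--Vietoris spectral sequence for the covering of $V(I)$ by the $V(P_j)$. Since local cohomology depends only on radicals, we may replace $I$ by $P_1\cap\dots\cap P_t$. This gives a first-quadrant (after reindexing) spectral sequence
$$E_1^{-a,b}=\bigoplus_{j_0<\dots<j_a}H^b_{P_{j_0}+\dots+P_{j_a}}(A)\Longrightarrow H^{b-a}_I(A),$$
whose $d_1$-differentials are the alternating sums of the natural maps induced by omitting one prime. On the row $b=d$, the $d_1$ from column $-(u+1)$ to column $-u$ is exactly $\Phi_I$. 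The target total degree is $v=d-u$, which is the entry $(-u,d)$. The theorem will follow once two things are shown. First, $E_\infty^{-u,d}=E_2^{-u,d}=\operatorname{coker}\Phi_I$. Second, every other entry on the antidiagonal $b-a=v$ vanishes (already at $E_1$, or at least at $E_2$). Completion is faithfully flat with $0$-dimensional fibre and preserves these data, so we may pass to $\widehat A$ and assume $A$ is complete.

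\textbf{Step 1: the $d$-th row and the differentials touching it.} Since $H^{>d}=0$, all incoming $d_r$ ($r\ge2$) into $(-u,d)$ start in rows $b=d+r-1>d$ and so vanish. Hence $E_\infty^{-u,d}$ is the $E_2$ cokernel intersected with the kernels of the outgoing $d_r$. These land in $(-u+r,\,d-r+1)$, i.e.\ at sums of $u-r+1$ primes, in local cohomology degree $d-r+1$. The plan is to show those targets vanish, using the same estimates as in Step 2. Their total degree is $v+1$, so this is an estimate of the same shape.

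\textbf{Step 2: vanishing off the top row.} Fix $a<u$ and $J=P_{j_0}+\dots+P_{j_a}$. Repeated application of Theorem~\ref{thm:Serre} gives $\bight(J)\le (a+1)c\le uc\le d-2$. We then need $H^b_J(A)=0$ for $b=v+a$ (and $b=v+a+1$ for Step 1). I would first try Faltings' equal-characteristic bound $\cd(A,J)\le d-\lfloor (d-1)/\bight J\rfloor$. The required numerical inequality $\lfloor (d-1)/((a+1)c)\rfloor>u-a$ holds comfortably when $a\ge1$, but is borderline for $a=0$.

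\textbf{Main obstacle.} The delicate case is the $a=0$ boundary column, together with entries with $b=d-1$. There the crude Faltings estimate is off by one, and one genuinely needs finer input. For $J$ with $\dim A/J\ge1$, Hartshorne--Lichtenbaum gives $H^d_J(A)=0$. For $\dim A/J\ge2$ one uses the Second Vanishing Theorem (this is where the separably closed residue field enters): $H^{d-1}_J(A)=0$ when the punctured spectrum of $A/J$ is connected, as for $J$ a prime or a sum of primes with a common generic component. Combining these refined vanishings with the height count of Theorem~\ref{thm:Serre}, case by case on $b\in\{d-1,d\}$ versus $b\le d-2$, is the step I expect to require the most care. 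Once it is done, the spectral sequence degenerates on the $v$-antidiagonal to the single term $\operatorname{coker}\Phi_I$.
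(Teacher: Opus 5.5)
The Mayer--Vietoris set-up is a sensible route; the paper itself gives no proof and only cites \cite{Lyu1}. Identifying $\Phi_I$ with the $d_1$ on row $d$ and killing incoming differentials for degree reasons are both fine. But Step~2 has a genuine gap, and it sits exactly where the content of the theorem lies.

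\textbf{The numerical claim in Step 2 is false.} You say $\lfloor (d-1)/((a+1)c)\rfloor>u-a$ ``holds comfortably'' for $a\ge1$. It fails for every $1\le a\le u-1$. Indeed $u=\lfloor (d-2)/c\rfloor$ forces $d-1\le (u+1)c$, so $\lfloor (d-1)/((a+1)c)\rfloor\le\lfloor (u+1)/(a+1)\rfloor\le u-a$. For example, take $d=8$, $c=2$ (so $u=3$) and $a=1$: Faltings gives only $\cd(A,P+Q)\le 7$, but you need $H^6_{P+Q}(A)=0$.

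\textbf{Corollary~\ref{cor:Lyu} helps only partly.} It gives $\cd(A,P_{j_0}+\dots+P_{j_a})\le d-\lfloor (d-1)/c\rfloor+a\le d-u+a$. This does kill the targets $H^{d-u+a+1}$ of the outgoing differentials in Step~1, where for $a\ge1$ your Faltings estimate also fails. But when $c\nmid d-1$ it misses the antidiagonal entries $H^{d-u+a}_{P_{j_0}+\dots+P_{j_a}}(A)$ by exactly one.

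\textbf{The $a=0$ column needs a different tool.} For $u\ge2$ the entry is $H^{d-u}_{P_j}(A)$ with $d-u\le d-2$. This is beyond both Hartshorne--Lichtenbaum and Second Vanishing. What kills it is the Huneke--Lyubeznik bound $\cd(A,P)\le d-1-\lfloor (d-2)/c\rfloor$ for formally geometrically irreducible ideals (Theorem~\ref{thm:HunLyu_irred}). This is where completeness and the separably closed residue field genuinely enter.

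\textbf{The $a\ge1$ vanishing is not a routine estimate.} Take $a=1$, two minimal primes $P,Q$, and $u\ge2$. Theorem~\ref{thm:HunLyu_irred} gives $H^{d-u}_P(A)=H^{d-u+1}_P(A)=0$, and likewise for $Q$. Mayer--Vietoris then yields $H^{d-u+1}_{P+Q}(A)\cong H^{d-u}_{P\cap Q}(A)$. So your $a=1$ entries vanish if and only if the theorem holds for the two-prime ideal $P\cap Q$, whose $\operatorname{coker}\Phi$ is $0$. Equivalently, Faltings' bound for $P\cap Q$ must be improved by one. Getting $H^{d-u+a}_{P_{j_0}+\dots+P_{j_a}}(A)=0$ for all $0\le a\le u-1$ therefore needs new input that uses the irreducibility of each $V(P_j)$. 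Only the extreme case $a=u-1$ follows from Second Vanishing. There, connectedness of the punctured spectrum of $A/(P_{j_0}+\dots+P_{j_{u-1}})$ comes from a Grothendieck/Fulton--Hansen type connectedness theorem, not from a ``common generic component''. The proposal supplies no such argument, so degeneration on the $v$-antidiagonal is not established.

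\textbf{The reduction to $\widehat A$ is not innocuous.} A minimal prime of $I$ can split in $\widehat A$, after which the single-prime vanishing fails. In fact the statement must be read with $A$ complete, as it is used in the paper. Let $A$ be the local ring of $\mathbb{A}^4_k$ at a point where an irreducible surface, with prime $P$, has two analytic branches $Q_1,Q_2$ meeting only at that point. Then $H^3_P(A)\otimes\widehat A\cong H^4_{Q_1+Q_2}(\widehat A)\neq0$, while $\operatorname{coker}\Phi_P=0$ since $d=4$, $c=2$, $u=1$.
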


We are now prepared to prove our main theorem.

\begin{theorem}\label{thm:cdbound}
    Let $A$ be a $d$-dimensional unramified regular local ring of mixed characteristic $(0,p)$. Let $I$ be a nonzero proper ideal of $A$ with big height $c$. Then $$\cd(A,I)\leq d-\floor*{\frac{d-1}{c}}.$$
\end{theorem}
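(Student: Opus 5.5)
We argue by strong induction on $d=\dim(A)$, and for fixed $d$ by descending induction on $n$. We may assume $I$ is radical, and by Lemma \ref{lemma:fflat} we may pass to the completion and then to a faithfully flat extension with separably closed residue field that stays unramified. It suffices to show $H^n_I(A)=0$ for every $n>d-\floor*{\frac{d-1}{c}}$. Write $u=\floor*{\frac{d-1}{c}}$. We split into three cases according to how $p$ sits relative to the minimal primes of $I$, as announced in the introduction.

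\emph{Case 1: $p$ is a nonzerodivisor on $A/I$.} Here we apply Theorem \ref{thm:induction} with $n=d-u+1$; note $n>c$ when $u\ge1$, and the case $u=0$ is trivial since then the bound is $d$. Fix $1\le s\le c-1$ and $P\supseteq I$ with $\dim(A/P)\ge s+1$. Then $A_P$ is regular local of dimension $d'\le d-s-1$, and $\bight(IA_P)\le c$ by Remark \ref{rmk:bight}. When $p\in P$, $A_P$ is unramified and the induction on $d$ applies; when $p\notin P$, $A_P$ has equal characteristic $0$ and Faltings' bound applies. Either way
\[
\cd(A_P,IA_P)\le d'-\floor*{\tfrac{d'-1}{c}}.
\]
We then need the numerical inequality $d'-\floor*{\frac{d'-1}{c}}<n-s=d-u+1-s$ for all $d'\le d-s-1$. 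Since $d'-\floor*{\frac{d'-1}{c}}$ is nondecreasing in $d'$, it suffices to check $d'=d-s-1$. Decreasing $d-1$ by $s+1\le c$ lowers $\floor{\cdot/c}$ by at most $1$, so the left side is at most $d-s-1-(u-1)=d-s-u<n-s$. The hypothesis of Theorem \ref{thm:induction} is therefore satisfied, and we conclude.

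\emph{Case 2: $p\in I$.} Then $H^n_I(A)$ can be computed from $\bar A=A/pA$, which is regular of equal characteristic $p$ and dimension $d-1$, with $\bight(I\bar A)=c-1$. The long exact sequence for $0\to A\xrightarrow{p}A\to\bar A\to0$, together with the fact that $H^n_I(A)$ is $p$-torsion, shows that $H^n_I(A)=0$ whenever $H^{n-1}_{I\bar A}(\bar A)=0$. Faltings' bound for $\bar A$ gives $\cd\le(d-1)-\floor*{\frac{d-2}{c-1}}$, and one checks that this is at most $d-u-1$. The case $c=1$ is separate: there $I\bar A$ is $\bar{\mathfrak m}$-primary and we use Hartshorne–Lichtenbaum type arguments.

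\emph{Case 3: $p$ is a zerodivisor on $A/I$ but $p\notin I$.} Write $I=I_1\cap I_2$ as in Lemma \ref{lemma:colon}, where $I_1$ collects the minimal primes containing $p$. Cases 1 and 2 bound $\cd(A,I_1)$ and $\cd(A,I_2)$ by $d-u$. Lemma \ref{lemma:p_zd_on_H_I^n} then makes $H^n_I(A)$ $p$-power torsion for $n>d-u$. Hence $H^n_I(A)=0$ if multiplication by $p$ is injective on it, and this follows from the vanishing of $H^{n-1}_{I\bar A}(\bar A)$ via the same long exact sequence. By Lemma \ref{lemma:radical(I+p)}, $\bight(I\bar A)\le\max(c_1-1,c_2)\le c$ in $\bar A$, where $c_1,c_2$ denote the big heights of $I_1,I_2$. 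Faltings then gives $\cd(\bar A,I\bar A)\le (d-1)-\floor*{\frac{d-2}{c}}$, which is at most $d-u-1$ except when $c\mid d-1$.

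That exceptional case is the main obstacle: there the bound only yields that $H^{v}_{I\bar A}(\bar A)$ with $v=d-u$ might survive. We handle it with Theorem \ref{thm:Lyu}, which expresses this module as the cokernel of $\Phi$. Using Theorem \ref{thm:Serre}, sums of $u+1$ minimal primes of height $\le c$ in $\bar A$ have height at most $(u+1)c$. We want to show that every term in the target of $\Phi$ either vanishes or is hit. The only surviving contributions come from tuples whose sum is $\bar{\mathfrak m}$-primary, and these should only involve primes from $I_2+(p)$. For such tuples we compare with $A/I_2$, on which $p$ is a nonzerodivisor, and invoke Case 1 for $I_2$ to kill the resulting obstruction.
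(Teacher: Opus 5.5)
Your plan follows the paper's architecture. Case~1 goes through Theorem~\ref{thm:induction} with the same numerical check. Case~2 reduces mod $p$ and applies Faltings. In Case~3 you use the $p$-power torsion from Lemma~\ref{lemma:p_zd_on_H_I^n} and Faltings on $\bar A=A/(p)$. That leaves only $d-1=Nc$ with $\bight(J)=c$, where $J=\sqrt{I+(p)}/(p)$ and $N=\floor{(d-1)/c}$. (Minor slip: if $c=1$ and $p\in I$, then $\sqrt I=(p)$, so $I\bar A$ is nilpotent, not $\bar{\mathfrak m}$-primary. The case $c=1$ is trivial anyway, since $\sqrt I$ is principal.)

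The gap is that this exceptional case, the only nontrivial part of Case~3, is not proved. First, your Serre count does not help. Here $\dim\bar A=Nc$ and Theorem~\ref{thm:Lyu} applies with $u=N-1$, so the target of $\Phi_J$ is indexed by $N$-element sets of minimal primes of $J$, and the bound $Nc=\dim\bar A$ excludes nothing. What you need is that minimal primes of $J$ coming from $I_1$ contain $p$, so they have height at most $c-1$ in $\bar A$. An $N$-sum involving one then has height at most $d-2$, and its $H^{d-1}$ vanishes by Hartshorne--Lichtenbaum. Only this shows the surviving targets involve just primes of $J_2=\sqrt{I_2+(p)}/(p)$.

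Second, and more seriously, ``compare with $A/I_2$ and invoke Case~1 for $I_2$ to kill the obstruction'' does not explain how target terms of $\Phi_J$ get hit. Case~1 cannot do this alone: it gives only $H^{d-N+1}_{I_2}(A)=0$, which does not force $H^{d-N}_{J_2}(\bar A)=0$. For example, take $I_2=(x_1,x_2)\cap(x_3,x_4)$ in $V[[x_1,\dots,x_4]]$, so $d=5$ and $c=N=2$; then $H^3_{J_2}(\bar A)\cong H^4_{\bar{\mathfrak m}}(\bar A)\neq 0$.

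The paper uses Case~1 through a diagram chase instead.
\begin{enumerate}
\item It first shows that $H^{d-N}_{J_2}(\bar A)\to H^{d-N}_J(\bar A)$ is onto.
\item It then compares the long exact sequences of $0\to A\xrightarrow{p}A\to\bar A\to 0$ with supports in $I_2$ and in $I$.
\item Vanishing of $H^{d-N+1}_{I_2}(A)$ makes $H^{d-N}_{I_2}(A)\to H^{d-N}_{J_2}(\bar A)$ onto, so $H^{d-N}_I(A)\to H^{d-N}_J(\bar A)$ is onto.
\item Hence the connecting map into $H^{d-N+1}_I(A)$ is zero, and $p$ acts injectively there.
\end{enumerate}
This kills the connecting map, not $H^{d-N}_J(\bar A)$ itself. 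So your reduction to the vanishing of $H^{n-1}_{I\bar A}(\bar A)$ would have to be weakened to fit this route.

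Alternatively, your literal plan (``every target term vanishes or is hit'') can be completed without Case~1. A minimal prime $Q$ of $I$ containing $p$ cannot contain a minimal prime of $I_2+(p)$, since it would then contain a minimal prime of $I_2$. So $Q/(p)$ is a minimal prime of $J$. Take any $N$-set $T$ indexing a nonzero target summand. Then $\Phi_J$ maps the summand indexed by $T\cup\{Q/(p)\}$ isomorphically onto the one indexed by $T$, because every other face contains $Q/(p)$ and carries a zero module. Hence $\Phi_J$ is onto and $H^{d-N}_J(\bar A)=0$. One of these two arguments must be supplied for the proof to be complete.
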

\begin{proof} We may assume that $A$ is complete and $I$ is radical as local cohomology and big height remain unchanged. If $c=1$, $I$ is principal and $\cd(A,I)\leq 1$. Thus we assume $c>1.$ We also assume $c<d$ as the claimed bound is immediate if $c=d.$

    \noindent\textbf{Case 1:} $p$ is a nonzerodivisor on $A/I$.
    
    We induce on $d$. There is nothing to prove in the case $d=1$. Assume $d>1.$ We need to prove that $H_I^q(A)=0$ for all $q\geq d+1-\floor*{(d-1)/c}$. Let $n=d+1-\floor*{(d-1)/c}.$ By Theorem \ref{thm:induction}, it suffices to prove that for all integers $s$ such that $1\leq s\leq c-1$ and for all $q\geq n-s,$ we have $H_{IA_P}^q(A_P)=0$ for all $P\in \Spec(A)$ such that $I\subseteq P$ and $\dim(A/P)\geq s+1.$ 
    
    Choose integers $s$ and $q$, and a prime $P$ satisfying the above conditions. Note that $p$ is a nonzerodivisor on $A_P/IA_P$ as well and $\dim(A_P)<\dim(A).$ 
    If $p\in P,$ then by the induction hypothesis,
    \begin{align*}
        \dim(A_P)-\floor*{\frac{\dim(A_P)-1}{\bight(IA_P)}}&\leq \dim(A_P)-\floor*{\frac{\dim(A_P)-1}{c}}\\
        &\leq d-s-1-\floor*{\frac{d-s-2}{c}}
    \end{align*} as $\bight(IA_P)\leq \bight(I).$ If $p\notin P,$ $A_P$ contains a field of characteristic $p$ and we have the same conclusion by \cite[Korollar 2]{Faltings}.  We also note that 
    \begin{align*}
        (n-s)-(d-s-1-\floor*{\frac{d-s-2}{c}})=2-\floor*{\frac{d-1}{c}}+\floor*{\frac{d-s-2}{c}}>0,
    \end{align*} where the last inequality follows from $1\leq s\leq c-1.$
    Hence, $H^q_{IA_P}(A_P)=0$ as required. 

    \noindent\textbf{Case 2:} $p\in I.$ 
    
    As $A$ is unramified, $A/(p)$ is a regular local ring of equicharacteristic $p$. As $\dim(A/(p))=d-1$ and $\bight(I/(p))=c-1$, by \cite[Korollar 2]{Faltings}, $$\cd(A/(p),I/(p))\leq d-1-\floor*{\frac{d-2}{c-1}}\leq d-1-\floor*{\frac{d-1}{c}}.$$ Hence, by the long exact sequence on local cohomology, $$\dots \to H_I^i(A/(p))\to H_I^{i+1}(A)\xlongrightarrow{p}H_I^{i+1}(A)\to H_I^{i+1}(A/(p))\to\dots, $$ we have $$\cd(A,I)\leq \cd(A/(p),I/(p))+1\leq d-\floor*{\frac{d-1}{c}}.$$

    \noindent\textbf{Case 3:} $p$ is a zerodivisor on $A/I$ and $p\not\in I$.
    
    For the treatment of this case, we assume that $A$ has separably closed residue field. We may do so as completion and strict Henselization are faithfully flat, and thus preserve cohomological dimension and big height (Lemma \ref{lemma:fflat}).
    Let $P_1,\dots,P_t$ be the minimal primes of $I$. Suppose \begin{align*}
        I_1=\bigcap_{p\in P_i}P_i \; \text{ and } \;
        I_2=\bigcap_{p\notin P_i}P_i.
    \end{align*}
    Then $I=I_1\cap I_2$, $p\in I_1$, and $p$ is a nonzerodivisor on $A/I_2$. By Lemma \ref{lemma:radical(I+p)}, $$\bight(I+(p))\leq\max\{\bight(I_1),\bight(I_2)+1\}\leq c+1.$$

    
    

    Let $n>d-\floor*{\frac{d-1}{c}}$. Note that as every minimal prime of $I_1$ or $I_2$ is a minimal prime of $I$, we have $\bight(I_1)\leq c$ and $\bight(I_2)\leq c.$ As $p\in I_1$ and $p$ is a nonzerodivisor on $A/I_2$, the cohomological dimensions of $I_1$ and $I_2$ are bounded above by $d-\floor*{\frac{d-1}{c}}$ by Case 2 and Case 1 respectively. By Lemma \ref{lemma:p_zd_on_H_I^n}, we conclude that $H^n_I(A)$ is $p$-power torsion.
    
    Let $$J=\sqrt{I(A/(p))}=\sqrt{I+(p)}/(p),\: J_1=I_1/(p) \text{ and } J_2=\sqrt{I_2(A/(p))}=\sqrt{I_2+(p)}/(p)$$ be ideals in $A/(p)$. We have $\bight(J)=\bight(I+(p))-1\leq c$. Using the exactness of the sequence $$H_J^{n-1}(A/(p))\to H_I^{n}(A)\xlongrightarrow{p}H_I^{n}(A),$$ it suffices to prove $H_J^{n-1}(A/(p))=0.$
    As $A/(p)$ is a regular local ring of dimension $d-1$ containing a field, by \cite[Korollar 2]{Faltings},
    \begin{align*}
        \cd(A/(p),J)&\leq d-1-\floor*{\frac{d-2}{\bight(J)}}\\
                    &\leq d-1-\floor*{\frac{d-2}{c}}\\
                    &\leq d-\floor*{\frac{d-1}{c}}\\
                    &\leq n-1.
    \end{align*}
    \noindent If any of the inequalities above are strict, we are done. For instance, if $c$ does not divide $d-1$, the second inequality above is strict. In the only remaining case, $$n=d-\floor*{\frac{d-1}{c}}+1, \,\;c|(d-1),\,\; \bight(J)=c, \text{ and } \cd(A/(p),J)=n-1.$$ We analyze the strictness of the inequality $d-1-\floor*{\frac{d-2}{c}}\geq \cd(A/(p),J)$ given by Faltings' bound on cohomological dimension. 

    Let $d-1=Nc$. Then $\floor*{\frac{d-1}{c}}=N$ and $\floor*{\frac{d-2}{c}}=N-1$.
    By Theorem \ref{thm:Lyu}, as $A$ has a separably closed residue field, $H^{d-N}_J(A/(p))$ is isomorphic to the cokernel of the map 
    $$ \Phi_J:\bigoplus_{j_0<\cdots<j_N}H^{d-1}_{P_{j_0}+\dots+P_{j_N}}(A/(p))\to \bigoplus_{j_0<\dots<j_{N-1}}H^{d-1}_{P_{j_0}+\dots+P_{j_{N-1}}}(A/(p))$$ 
    that sends every $x\in \bigoplus_{j_0<\cdots<j_N}H^{d-1}_{P_{j_0}+\dots+P_{j_N}}(A/(p))$ to $\oplus_{s=0}^{s=N-1}(-1)^sh_s(x)$ where $$h_s:H^{d-1}_{P_{j_0}+\dots+P_{j_N}}(A/(p))\to H^{d-1}_{P_{j_0}+\dots+\widehat{P_{j_s}}+\dots+P_{j_N}}(A/(p))$$ ($\widehat{P_{j_s}}$ means that $P_{j_s}$ has been omitted) is the natural map induced by the containment $$P_{j_0}+\dots+\widehat{P_{j_s}}+\dots+P_{j_N}\subseteq P_{j_0}+\dots+P_{j_N}.$$
    Note that if $\bight(J)=c,$ then $\bight(I)=\bight(I_2)=c+1$ by Lemma \ref{lemma:radical(I+p)}. Hence, $\bight(J_1)=\bight(I_1)-1<c$ and $\bight(J_2)=\bight(I_2+(p))-1=c$. Observe that if any of $P_{j_0},\dots,P_{j_{N-1}}$ is a minimal prime of $J_1$, 
    $$\height(P_{j_0}+\dots+P_{j_{N-1}})\leq (c-1)+(N-1)c=d-2$$ by Theorem \ref{thm:Serre}. Hence, $P_{j_0}+\dots+P_{j_{N-1}}$ cannot be primary to the maximal ideal of $A/(p)$ and 
    $$H^{d-1}_{P_{j_0}+\dots+P_{j_{N-1}}}(A/(p))=0$$ by the Hartshorne--Lichtenbaum Vanishing Theorem. Without loss of generality, assume that $P_1,\dots,P_s$ are the minimal primes of $J_2$. The above calculations ascertain that the codomain of $\Phi_J$ is $$\bigoplus_{j_0<\dots<j_{N-1}\leq s}H^{d-1}_{P_{j_0}+\dots+P_{j_{N-1}}}(A/(p)).$$
    As $\bight(J_2)=c$, Theorem \ref{thm:Lyu} applies similarly and $H^{d-n}_{J_2}(A/(p))$ is isomorphic to the cokernel of the natural map $$ \Phi_{J_2}:\bigoplus_{j_0<\cdots<j_N\leq s}H^{d-1}_{P_{j_0}+\dots+P_{j_N}}(A/(p))\to \bigoplus_{j_0<\dots<j_{N-1}\leq s}H^{d-1}_{P_{j_0}+\dots+P_{j_{N-1}}}(A/(p)).$$
    Hence, the codomain of $\Phi_J$ is equal to the codomain of $\Phi_{J_2}$. Further, it is clear that $\image(\Phi_{J_2})\subseteq\image(\Phi_{J}).$ We conclude that the natural map 
    $$H_{J_2}^{d-N}(A/(p))\to H_{J}^{d-N}(A/(p))$$ induced by the inclusion $J\subseteq J_2$ is surjective. 
    
    Consider the commutative diagram
    \[\begin{tikzcd}[cramped,sep=tiny]
	{H_{I_2}^{d-N}(A)} & {\xlongrightarrow{f_1}} & {H_{J_2}^{d-N}(A/(p))} & {\xlongrightarrow{f_2}} & {H_{I_2}^{d-N+1}(A)} \\
	{\Big\downarrow{\scriptstyle \alpha}} && {\Big\downarrow{\scriptstyle \beta}} && {\Big\downarrow}\\
	{H_{I}^{d-N}(A)} & {\xlongrightarrow{g_1}} & {H_{J}^{d-N}(A/(p))} & {\xlongrightarrow{g_2}} & {H_{I}^{d-N+1}(A)} & {\xlongrightarrow{p}} & {H_{I}^{d-N+1}(A)}
    \end{tikzcd}\]
    We showed that $\beta$ is surjective. Since $p$ is a nonzerodivisor on $A/I_2$, we have $\cd(A,I_2)\leq d-N$ by Case 1. Therefore, $H_{I_2}^{d-N+1}(A)=0$ and $f_1$ is surjective. Consequently, $\beta f_1=g_1\alpha$ is surjective, forcing $g_1$ to be surjective and $g_2=0.$ Hence, multiplication by $p$ on $H_I^{d-N+1}(A)$ is injective and we infer that $H_I^{d-N+1}(A)=0$ as $H_I^{d-N+1}(A)=H_I^n(A)$ is $p$-power torsion by Lemma \ref{lemma:p_zd_on_H_I^n}. We have thus shown that \[\cd(A,I)\leq d-N=d-\floor*{\frac{d-1}{c}}.\qedhere\]
\end{proof}

\begin{remark}
    Note that in the case $p\in I$ in the above proof, we in fact show a stronger bound for $\cd(A,I)$, namely that $$\cd(A,I)\leq\cd(A/p,I/p)+1\leq d-\floor*{\frac{d-2}{c-1}}.$$ This bound may differ from the bound of Theorem \ref{thm:cdbound}: for example, if $d=5$ and $c=3$. This may lead one to question the general sharpness of the bound on cohomological dimension in Theorem \ref{thm:cdbound}. We address the sharpness in the following example.
\end{remark}

\begin{example}\label{example:sharpness}
    Given an unramified regular local ring $(A,\mathfrak{m})$ of dimension $d>0$ and a positive integer $c\leq d$, there exists an ideal $I$ of $A$ such that $\cd(A,I)=d-\floor*{\frac{d-1}{c}}.$ We construct this ideal as in \cite{Lyu2}. 

    Let $N=\floor*{\frac{d-1}{c}}.$ Let $I_0,\dots,I_N$ be ideals of pure height $c$ in $A$ such that $I_0+\dots+I_N$ is $\mathfrak{m}$-primary and let $I=I_0\cap\dots\cap I_N$. Then $\cd(A,I)=d-N.$ The proof of this statement is identical to \cite{Lyu2}.

    To resolve any conflict with the previous remark, we show that in this example, if $p\in I$, then $\floor*{\frac{d-2}{c-1}}= \floor*{\frac{d-1}{c}}.$ Indeed, if $p\in I,$ we have $p\in I_j$ for all $j\in\{0,\dots,N\}$ and by Theorem \ref{thm:Serre}, $$d=\height(I_0+\dots+I_N)\leq 1+(N+1)(c-1)=Nc+c-N.$$ Let $d-1=Nc+k$, where $0\leq k<c$. Then the above equation yields $N+k+1\leq c$, or $N+k-1\leq c-2$. Observe that $$d-2=N(c-1)+N+k-1,$$ and hence, $\floor*{\frac{d-2}{c-1}}=N=\floor*{\frac{d-1}{c}}$. 
\end{example}

\begin{remark}
    In \cite{HNBPW}, the authors attributed Theorem \ref{thm:cdbound} to Faltings and utilised it to derive consequences of their results. In subsequent correspondence, the authors clarified that this attribution was based on a misinterpretation of Faltings’ work, and that a proof of Theorem \ref{thm:cdbound} does not appear to be available in the literature.

    The argument presented above recovers their result \cite[Theorem 3.11]{HNBPW}. In fact, we obtain a strengthened version below by exploiting the Second Vanishing Theorem of Zhang \cite{Zhang} in unramified mixed characteristic.
\end{remark}

\begin{theorem}[{\cite[Theorem 1.4]{Zhang}}] \label{thm:secondvanishing}
    Let $A$ be an unramified $d$-dimensional regular local ring of mixed characteristic with separably closed residue field. Then, for each ideal $I$ of $A$, the following conditions are equivalent:
    \begin{enumerate}
        \item $H_I^j(A)=0$ for all $j>d-2.$
        \item $\dim(A/I)\geq 2$ and the punctured spectrum of $A/I$ is connected.
    \end{enumerate}
\end{theorem}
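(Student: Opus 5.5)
Zhang's Second Vanishing Theorem is quoted here from \cite{Zhang}, so nothing is proved about it in this paper; the following is how I would prove it with the tools assembled above. First reduce to the case where $A$ is complete, so $A=V[[X_2,\dots,X_d]]$ with $V$ a complete DVR with uniformizer $p$, and $I$ is radical. This reduction is allowed because completion and strict Henselization are faithfully flat with zero-dimensional fibres, and neither changes local cohomology, nor the dimension of $A/I$, nor (with separably closed residue field) the connectedness of the punctured spectrum.

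\textbf{(1)$\Rightarrow$(2).} This direction is elementary. If $\dim(A/I)\le 1$, some minimal prime $P$ of $I$ has height $\ge d-1$. Then either $I$ is $\mathfrak m$-primary, so $H^d_I(A)\neq0$, or Remark \ref{rmk:bight}(4) gives $H^{d-1}_{IA_P}(A_P)\ne0$, hence $H^{d-1}_I(A)\neq 0$. If instead the punctured spectrum of $A/I$ is disconnected, write $I=I_1\cap I_2$ with $I_1,I_2$ not $\mathfrak m$-primary and $I_1+I_2$ $\mathfrak m$-primary. The Mayer--Vietoris sequence
\[H^{d-1}_I(A)\to H^d_{I_1+I_2}(A)\to H^d_{I_1}(A)\oplus H^d_{I_2}(A)\]
has right-hand term zero by Hartshorne--Lichtenbaum, so $H^d_{\mathfrak m}(A)\neq0$ is a quotient of $H^{d-1}_I(A)$.

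\textbf{(2)$\Rightarrow$(1).} Here $H^d_I(A)=0$ by Hartshorne--Lichtenbaum, so it remains to kill $H^{d-1}_I(A)$. I would mimic the structure of the proof of Theorem \ref{thm:induction}.

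\emph{Step 1: $\Supp H^{d-1}_I(A)\subseteq\{\mathfrak m\}$.} For $P\ne\mathfrak m$ we have $\dim A_P\le d-1$. Connectedness together with $\dim(A/I)\ge2$ forces every minimal prime of $I$ to have dimension $\ge2$: a one-dimensional component would meet the others only at $\mathfrak m$ and disconnect the punctured spectrum. Hence $IA_P$ is never $PA_P$-primary for $\dim A_P= d-1$, and Hartshorne--Lichtenbaum applied in $\widehat{A_P}$ (after passing to its minimal primes) gives $H^{d-1}_{IA_P}(A_P)=0$.

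\emph{Step 2: shape of the module.} By \cite[Theorem 5.1]{Zhou} and finiteness of Bass numbers \cite{Lyu4,NB}, there is a minimal injective resolution
\[0\to H^{d-1}_I(A)\to E_A(k)^{\oplus a}\to E_A(k)^{\oplus b}\to 0.\]

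\emph{Step 3: show $a=b=0$.} I would use the auxiliary ring $D=A/I\ctens_V A$ and the diagonal ideal $\Delta$. The aim is to prove $H^{\dim(A/I)-1}_\Delta\bigl(D\dtens_A H^{d-1}_I(A)\bigr)=0$ via the spectral sequence of that proof, now with $n=d-1$. The abutment is $H^{\dim D-1}_{\Delta}(D)$, and this is where connectedness must enter. The idea is a Faltings-type connectedness statement: since $A/I$ has connected punctured spectrum of dimension $\ge2$, so does $D/\Delta\cong A/I$ in a suitable sense, and $\Delta$ is generated by $d-1$ elements, which should give vanishing of $H^{\dim D-1}_\Delta(D)$. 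Granting this, the Matlis duality and Nakayama argument at the end of Theorem \ref{thm:induction} forces $a=b=0$.

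\textbf{Main obstacle.} The hardest step is the vanishing of that abutment. Unlike in Theorem \ref{thm:induction}, the cohomological index is not above the number of generators of $\Delta$, so it must come from connectedness of the diagonal in $\Spec D$. Dimension counting for the off-diagonal $E_2$ terms should go through as before, using Step 1 and \cite[Lemma 2.1]{Skalit}.

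If this step fails, I would fall back on a case split by $p$, as in Theorem \ref{thm:cdbound}. If $p\in I$, work in $A/(p)$ and apply the equal characteristic theorem \cite{Ogus,HunLyu}. Otherwise use the sequence $H^{d-2}_{J}(A/(p))\to H^{d-1}_I(A)\xrightarrow{p}H^{d-1}_I(A)$ with $J=\sqrt{I(A/(p))}$. Showing that $H^{d-1}_I(A)$ is $p$-power torsion, for instance by Lemma \ref{lemma:p_zd_on_H_I^n} applied to the splitting $I=I_1\cap I_2$ by whether $p$ lies in the minimal prime, reduces the problem to $H^{d-2}_J(A/(p))=0$. That vanishing again requires a connectedness statement, now for the punctured spectrum of $A/(I+(p))$, and controlling that connectedness is the delicate point.
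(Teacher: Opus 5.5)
The paper gives no proof of this statement; it is quoted from Zhang. Your argument has to stand on its own. Your (1)$\Rightarrow$(2) argument and Steps 1--2 are fine for complete $A$. The decisive Step 3, however, is unproved, and it is aimed at the wrong place.

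Write $\delta=\dim(A/I)\ge 2$. The abutment of $E_2^{\delta-1,\,d-1}$ is $H^{\delta+d-2}_\Delta(D)$. Since $\delta+d-2\ge d>d-1$, which is the number of generators of $\Delta$, this abutment vanishes trivially, contrary to what you say, so connectedness cannot enter there. Incoming differentials also vanish, because $H^{\ge d}_I(A)=0$.

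Everything therefore hinges on the outgoing differentials into $E_2^{\delta-1+r,\,d-r}=H^{\delta-1+r}_\Delta(D\dtens_A H^{d-r}_I(A))$ for $2\le r\le\height(I)$. These do \emph{not} die by the dimension count of Theorem \ref{thm:induction}. That count needs $\dim\Supp H^{d-r}_I(A)\le r-1$, which fails as soon as $I$ has a minimal prime of dimension $r$, and Step 1 says nothing about $H^{d-r}_I(A)$. In fact your argument uses connectedness only through the consequence that every minimal prime of $I$ has dimension $\ge 2$, and that is not enough.

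Here is a counterexample to that weaker use. Take $A=V[[X_2,\dots,X_5]]$ and $I=P_1\cap P_2$ with $P_1=(X_2,X_3,X_4)$ and $P_2=(X_2-p,X_3,X_5)$.
\begin{itemize}
\item $p$ is a nonzerodivisor on $A/I$, both components are $2$-dimensional, and they meet only at $\mathfrak m$.
\item Mayer--Vietoris gives $H^2_I(A)=0$, $H^3_I(A)\cong H^3_{P_1}(A)\oplus H^3_{P_2}(A)$ (support of dimension $2$), and $H^4_I(A)\cong E_A(k)$.
\item Hence $E_2^{1,4}\cong H^{6}_{\Delta+\mathfrak mD}(D)\neq 0$, while the abutment $H^5_\Delta(D)$ is $0$.
\item The differentials into $E^{4,2}$ and $E^{5,1}$ have zero targets, so $d_2\colon E_2^{1,4}\to E_2^{3,3}=H^3_\Delta(D\dtens_A H^3_I(A))$ must be injective, and $E_2^{3,3}\neq 0$.
\end{itemize}
So the off-diagonal terms you expected to kill by counting are genuinely nonzero when the punctured spectrum is disconnected. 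A proof must show that connectedness forces these differentials to vanish, and the proposal contains no mechanism for that. This is precisely the missing input.

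Your fallback does not close the gap either. The case $p\in I$ works: reduce to $A/(p)$ and apply the equal-characteristic theorem. Note that $p$-power torsion of $H^{d-1}_I(A)$ is automatic in general from Step 1. Invoking Lemma \ref{lemma:p_zd_on_H_I^n} instead would require the second vanishing for $I_1$ and $I_2$, whose punctured spectra need not be connected.

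For $p\notin I$, your sufficient condition $H^{d-2}_J(A/(p))=0$ is false even in the simplest case, a $2$-dimensional prime $I$ with $p\notin I$. There $\dim A/(I+(p))=1$, so every minimal prime of $J$ has height $d-2=\dim(A/(p))-1$, and $H^{d-2}_J(A/(p))\neq 0$ by Remark \ref{rmk:bight}(4). You would have to show that the connecting map $H^{d-2}_J(A/(p))\to H^{d-1}_I(A)$ is zero, not that its source vanishes, and that again needs an idea absent from the proposal.

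Finally, your opening reduction is wrong as stated. Completion can disconnect the punctured spectrum: a $2$-dimensional local domain can be analytically reducible with two branches meeting only at the closed point. Condition (1), by contrast, is unchanged by completion. The statement must therefore be read for complete $A$, as in the paper's introduction.
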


\begin{corollary}[cf. {\cite[Lemma]{Lyu2}}, {\cite[Lemma 3.10]{HNBPW}}]\label{cor:Lyu}
    Let $A$ be a $d$-dimensional regular local ring of equal characteristic or unramified mixed characteristic. Let $I_1,\dots,I_N$ be ideals of $A$ of big height at most $c\geq 1$. Then $$\cd(A,I_1+\dots+I_N)\leq d-\floor*{\frac{d-1}{c}}+N-1.$$
\end{corollary}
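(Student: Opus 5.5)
We argue by induction on $N$, with the standard Mayer--Vietoris sequence as the inductive engine. The base case $N=1$ is exactly the bound $\cd(A,I_1)\leq d-\floor*{\frac{d-1}{c}}$. In equal characteristic this is \cite[Korollar 2]{Faltings}. In unramified mixed characteristic it is Theorem \ref{thm:cdbound}, applied with big height $c'=\bight(I_1)\leq c$, since $\floor*{\frac{d-1}{c'}}\geq\floor*{\frac{d-1}{c}}$. Two degenerate cases need a word. If $I_1$ is the unit ideal, $\cd$ is $-\infty$ and there is nothing to show. If $I_1=0$, then $\cd=0$ and the bound is trivial.

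For the inductive step, set $J=I_1+\dots+I_{N-1}$ and consider the Mayer--Vietoris sequence
\[
H^{j-1}_{J}(A)\oplus H^{j-1}_{I_N}(A)\to H^{j-1}_{J\cap I_N}(A)\to H^{j}_{J+I_N}(A)\to H^{j}_{J}(A)\oplus H^{j}_{I_N}(A).
\]
This gives
\[
\cd(A,J+I_N)\leq\max\{\cd(A,J),\,\cd(A,I_N),\,\cd(A,J\cap I_N)+1\}.
\]
The first two terms are already bounded by induction. The remaining task is to bound $\cd(A,J\cap I_N)$ by $d-\floor*{\frac{d-1}{c}}+N-2$.

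The key observation is that $\sqrt{J\cap I_N}=\sqrt{JI_N}$, and that
\[
JI_N=\sum_{i<N} I_iI_N,\qquad \sqrt{I_iI_N}=\sqrt{I_i\cap I_N}.
\]
Each minimal prime of $I_i\cap I_N$ is a minimal prime of $I_i$ or of $I_N$, so $\bight(I_i\cap I_N)\leq c$. Hence $J\cap I_N$ has the same radical as a sum of $N-1$ ideals $I_i\cap I_N$, each of big height at most $c$. The induction hypothesis for $N-1$ summands then gives
\[
\cd(A,J\cap I_N)\leq d-\floor*{\frac{d-1}{c}}+N-2.
\]
Adding $1$ yields the claimed bound, which closes the induction.

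The main point to get right is this radical identity, $\sqrt{(\sum_{i<N} I_i)\cap I_N}=\sqrt{\sum_{i<N} I_iI_N}$, which follows from $\sqrt{\mathfrak a\cap\mathfrak b}=\sqrt{\mathfrak a\mathfrak b}$ and the distributivity $\mathfrak a\mathfrak b=\sum_i I_i\mathfrak b$. We also need to confirm that nothing in the induction requires the ideals to be proper or nonzero, since the degenerate cases are harmless. The Second Vanishing Theorem (Theorem \ref{thm:secondvanishing}) is not needed for this statement. We expect it to enter only in a sharper corollary, and the plan above uses only Theorem \ref{thm:cdbound} together with Faltings' bound.
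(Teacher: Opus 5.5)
Your proposal is correct and matches the paper's proof: induction on $N$, with base case Faltings' bound in equal characteristic and Theorem \ref{thm:cdbound} in unramified mixed characteristic. The inductive step is the same Mayer--Vietoris argument, using that $(I_1+\dots+I_{N-1})\cap I_N$ has the same radical as $\sum_{i<N}(I_i\cap I_N)$, where each summand has big height at most $c$. Your explicit treatment of the degenerate cases (unit or zero ideal, and $\bight(I_1)<c$ via monotonicity of the floor) is extra care that the paper leaves implicit.
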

\begin{proof}
    We proceed by induction on $N$. The case $N=1$ follows from \cite[Satz 1]{Faltings} in equal characteristic and Theorem \ref{thm:cdbound} in unramified mixed characteristic. For $N>1,$ let $I'=I_1+\dots+I_{N-1}$, $I=I'+I_N$ and $J=I_1\cap I_t+\dots+I_{N-1}\cap I_N$. Note that each $I_j\cap I_t$ has big height at most $c$ by Remark \ref{rmk:bight}. As $\sqrt{J}=\sqrt{I'\cap I_N}$, the Mayer-Vietoris sequence for local cohomology yields $$\cdots\to H_J^{i-1}(A)\to H_I^{i}(A)\to H_{I'}^{i}(A)\oplus H_{I_N}^{i}(A)\to \cdots$$ 
    For $i>d-\floor*{\frac{d-1}{c}}+N-1$, $H_J^{i-1}(A)=H_{I'}^{i}(A)=H_{I_N}^{i}(A)=0$ by the induction hypothesis. Hence, $H_I^i(A)=0$ for $i>d-\floor*{\frac{d-1}{c}}+N-1$.
\end{proof}

\begin{theorem}[cf. {\cite[Theorem 5.2]{HunLyu}}]\label{thm:example}
    Let $(A,\mathfrak{m})$ be a $d$-dimensional regular local ring of equal characteristic or unramified mixed characteristic. Let $c<d$ be a positive integer and set $N=\floor*{\frac{d-1}{c}}$. Let $I_1,\dots,I_N$ be ideals of $A$ of big height at most $c$ such that $\Spec(A/(I_1+\dots+I_N))-\{\mathfrak{m}\}$ is disconnected. Then $\cd(A,I_1\cap\dots\cap I_N)=d-N.$ 
\end{theorem}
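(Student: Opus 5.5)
We prove the two inequalities $\cd(A,I)\leq d-N$ and $\cd(A,I)\geq d-N$ separately, where $I=I_1\cap\dots\cap I_N$.

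\emph{Upper bound.} Every minimal prime of $I$ is a minimal prime of some $I_j$. Hence $\bight(I)\leq c$ by Remark~\ref{rmk:bight}(1). If $\bight(I)=c'\leq c$, then $\floor*{\frac{d-1}{c'}}\geq N$. So Theorem~\ref{thm:cdbound} in unramified mixed characteristic, or \cite[Korollar 2]{Faltings} in equal characteristic, gives $\cd(A,I)\leq d-N$.

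\emph{Lower bound.} We may pass to the completion of the strict Henselization. This is faithfully flat with zero-dimensional closed fiber, so cohomological dimension and big heights are preserved (Lemma~\ref{lemma:fflat}). Disconnectedness of the punctured spectrum of $A/(I_1+\dots+I_N)$ is also preserved, because the disconnection is witnessed by two ideals whose sum is $\mathfrak{m}$-primary and whose intersection has the same radical; this property survives the flat extension. The aim is to show $H^{d-N}_I(A)\neq 0$ by induction on $N$, using Mayer--Vietoris in the style of \cite{HunLyu}.

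For $N=1$ the claim is $\cd(A,I_1)=d-1$. The Second Vanishing Theorem (Theorem~\ref{thm:secondvanishing} in mixed characteristic, \cite{Ogus,PS,HunLyu} in equal characteristic) says $\cd(A,I_1)\leq d-2$ exactly when $\dim A/I_1\geq2$ and the punctured spectrum of $A/I_1$ is connected. Here the punctured spectrum is disconnected, so $H^{d-1}_{I_1}(A)\neq0$. Combined with the Hartshorne--Lichtenbaum theorem, which gives $\cd\leq d-1$ since $c<d$ and so $I_1$ is not $\mathfrak m$-primary, this yields $\cd(A,I_1)=d-1$.

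For $N>1$, put $I'=I_1\cap\dots\cap I_{N-1}$, so that $I=I'\cap I_N$. Mayer--Vietoris gives
\[
H^{d-N}_{I'}(A)\oplus H^{d-N}_{I_N}(A)\to H^{d-N}_{I}(A)\to H^{d-N+1}_{I'+I_N}(A)\to H^{d-N+1}_{I'}(A)\oplus H^{d-N+1}_{I_N}(A).
\]
We need the right-hand term to vanish and $H^{d-N+1}_{I'+I_N}(A)\neq0$. Nonvanishing of the right-hand term fails: $I'$ has big height at most $c$ and only $N-1$ components, so the bound on $\cd(A,I')$ is not strong enough. The better decomposition goes the other way. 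Write $I'+I_N=\sum_{j<N}(I_j+I_N)$ up to radical is false in general, so instead we induct on sums: set $K_j=I_j$ and prove by induction on $k$ that
\[
H^{d-1+?}\ldots
\]
This index bookkeeping is where the main obstacle lies.

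The cleanest route is to apply Mayer--Vietoris repeatedly to the intersection and use Corollary~\ref{cor:Lyu}. The Mayer--Vietoris spectral sequence for the cover $I=\bigcap_j I_j$ has $E_1$-terms $H^{q}_{I_{j_0}+\dots+I_{j_k}}(A)$ converging to $H^{q-k}_I(A)$. By Corollary~\ref{cor:Lyu} (with $N$ replaced by $k+1$), each sum of $k+1$ of the ideals has $\cd\leq d-N+k$. The extreme term $k=N-1$, $q=d-1$ is $H^{d-1}_{I_1+\dots+I_N}(A)$, which is nonzero by the $N=1$ argument applied to the disconnected sum. Every differential into or out of this position lands in $H^{q'}$ of a sum of $k'+1$ ideals with $q'>d-N+k'$, hence vanishes. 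So this term survives to $E_\infty$ and contributes to $H^{d-1-(N-1)}_I(A)=H^{d-N}_I(A)$, giving $\cd(A,I)\geq d-N$.

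The step requiring most care is this degree check on the differentials: one must verify that $d_r$ from position $(k,q)=(N-1,d-1)$ goes to $(N-1+r,\,d-r)$, which is zero because there are no $(N+r)$-fold sums, and that incoming maps come from $(N-1-r,\,d+r-2)$ with $d+r-2>d-N+(N-1-r)$, which holds since $2r>1$.
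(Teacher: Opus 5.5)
Your proof is correct once one indexing slip (second paragraph) is fixed. The upper bound and the reduction to a complete ring with separably closed residue field match the paper, but your lower bound takes a different route.

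\textbf{Comparison with the paper.} The paper does not use the full Mayer--Vietoris spectral sequence. It sets $\mathfrak a_i=(I_1\cap\dots\cap I_i)+I_{i+1}+\dots+I_N$, gets $\cd(A,\mathfrak a_i)\le d-i$ from Corollary~\ref{cor:Lyu}, and proves $\cd(A,\mathfrak a_i)=d-i$ by induction on $i$. The base case $\mathfrak a_1=I_1+\dots+I_N$ is Second Vanishing. Each step uses the two-term Mayer--Vietoris sequence for $J_1=I_i+\dots+I_N$ and $J_2=(I_1\cap\dots\cap I_{i-1})+I_{i+1}+\dots+I_N$. Here $\sqrt{J_1+J_2}=\sqrt{\mathfrak a_{i-1}}$ and $\sqrt{J_1\cap J_2}=\sqrt{\mathfrak a_i}$, and Corollary~\ref{cor:Lyu} kills $H^{d-i+1}_{J_1}(A)\oplus H^{d-i+1}_{J_2}(A)$, so $H^{d-i}_{\mathfrak a_i}(A)$ surjects onto $H^{d-i+1}_{\mathfrak a_{i-1}}(A)\neq0$.

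Your spectral sequence does these $N-1$ steps at once. Only sums of subfamilies of the $I_j$ appear, never sums involving intersections, and the corner term shows directly that $H^{d-1}_{I_1+\dots+I_N}(A)$ is a quotient of $H^{d-N}_I(A)$. The cost is importing the Mayer--Vietoris spectral sequence. The paper needs only long exact sequences, and it records the intermediate equalities $\cd(A,\mathfrak a_i)=d-i$ along the way.

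The attempt you abandoned is exactly the paper's final step $i=N$. The right-hand term there does vanish: $\bight(I')\le c$ already gives $\cd(A,I')\le d-N$ by Theorem~\ref{thm:cdbound}, however many components $I'$ has. What is then needed is $H^{d-N+1}_{I'+I_N}(A)\neq0$, which is the case $i=N-1$ of the paper's induction. That paragraph should be deleted or completed.

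\textbf{The indexing slip.} Your explicit degree check has the differentials backwards. With $E_1^{-k,q}=\bigoplus_{j_0<\dots<j_k}H^q_{I_{j_0}+\dots+I_{j_k}}(A)\Rightarrow H^{q-k}_I(A)$, the differential $d_r$ must raise the total degree $q-k$ by one, so it goes from $(k,q)$ to $(k-r,q-r+1)$. Your proposed targets $(N-1+r,d-r)$ have total degree $d-N+1-2r$, so they cannot be right. The correct check is:
\begin{itemize}
\item Outgoing differentials from the corner $(N-1,d-1)$ land in $H^{d-r}$ of $(N-r)$-fold sums. These vanish because Corollary~\ref{cor:Lyu} bounds their cohomological dimension by $d-r-1$.
\item Incoming differentials would have to come from $k=N-1+r>N-1$, where there are no terms.
\end{itemize}
With this correction your conclusion stands.

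Finally, for the corner term, justify $H^d_{I_1+\dots+I_N}(A)=0$ by Corollary~\ref{cor:Lyu}, or by noting that the punctured spectrum is nonempty so the sum is not $\mathfrak m$-primary. The reason ``$c<d$'' does not apply here, since the sum may have big height larger than $c$.
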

\begin{proof} 
    If $(B,\mathfrak{n})$ is a faithfully flat $A$-algebra with zero-dimensional fiber, then $\bight(I_jB)=\bight(I)$  (Lemma \ref{lemma:fflat}) and $\Spec(B/(I_1B+\cdots+I_NB)-\{\mathfrak{n}\}$ is disconnected. Hence, we may assume that $A$ has separably closed residue field.
    
    Let $\mathfrak{a}_i=(I_1\cap\dots\cap I_i) + I_{i+1}+\dots+I_N.$
    By Corollary \ref{cor:Lyu},
    $$\cd(A,\mathfrak{a}_i)\leq d-N+(N-i+1)-1=d-i.$$
    We show by induction that $\cd(A,\mathfrak{a}_i)=d-i.$ For $i=1$, as the punctured spectrum of
    $A/\mathfrak{a}_1$ is disconnected, this follows
    from \cite[Theorem 2.9]{HunLyu} in equal characteristic and Theorem \ref{thm:secondvanishing} in unramified mixed characteristic. For $i>1$, let
    $$J_1=I_i+\dots+I_N \text{ and } J_2=(I_1\cap\dots\cap I_{i-1})+I_{i+1}+\dots+I_N.$$
    Then $\sqrt{J_1+J_2}=\sqrt{\mathfrak{a}_{i-1}}$ and
    $\sqrt{J_1\cap J_2}=\sqrt{\mathfrak{a}_i}.$ Observe that $\cd(A,J_s)\leq d-i$ for $s=1,2$ by Lemma \ref{cor:Lyu}. 
    The associated Mayer-Vietoris sequence for local cohomology
    $$\cdots\to H_{\mathfrak{a}_i}^{d-i}(A)\to H_{\mathfrak{a}_{i-1}}^{d-i+1}(A)\to H_{J_1}^{d-i+1}(A)\oplus H_{J_2}^{d-i+1}(A)\to\cdots$$ implies that $H_{\mathfrak{a}_i}^{d-i}(A)\neq 0$ as $\cd(A,\mathfrak{a}_{i-1})=d-i+1$ by the induction hypothesis. Hence, $\cd(A,\mathfrak{a}_i)=d-i.$ The conclusion follows as $\mathfrak{a}_N=I_1\cap\dots\cap I_N.$
\end{proof}

We elaborate on an example of Huneke and Lyubeznik to show that Theorem \ref{thm:example} addresses examples not covered by Example \ref{example:sharpness}.

\begin{example}[{\cite[Example 5.4]{HunLyu}}]
    Let $(A,\mathfrak{m})$ be a $d$-dimensional regular local ring of equal characteristic or unramified mixed characteristic. Suppose $P_1,\dots,P_5$ are prime ideals of $A$ of height $c$ such that $\floor*{\frac{d-1}{c}}=2.$ Further, suppose that $P_i+P_j+P_k$ is $\mathfrak{m}$-primary iff $$\{i,j,k\}\in\Lambda=\{(1,3,4),(1,3,5),(1,2,5),(2,3,4),(2,4,5)\}.$$ Let $I=P_1\cap\cdots\cap P_5.$
    For instance, we may consider $A=V[[x_2,\dots,x_9]]$, where $V$ is a discrete valuation ring with uniformizer $p$ and separably closed residue field, and $$P_1=(p,x_2,x_3,x_4), \;P_2=(x_3,x_4,x_5,x_6),\; P_3=(x_5,x_6,x_7,x_8),$$ $$P_4=(p,x_2,x_8,x_9) \text{ and } P_5=(x_6,x_7,x_8,x_9).$$
    Note that $\Lambda$ has the property that each $i$, $1\leq i\leq5$ appears no more than thrice and each pair $(i,j)$ appears no more than twice. This implies that we cannot split $\{1,\dots,5\}$ into three distinct subsets $S_0$, $S_1$ and $S_2$ such that $P_i+P_j+P_k$ is $\mathfrak{m}$-primary for all $i\in S_0$, $j\in S_1$, $k\in S_2$. Hence, we are unable to apply the result of Example \ref{example:sharpness}.

    However, we may write $I=I_1\cap I_2$, where $I_1=P_1\cap P_2\cap P_3$ and $I_2=P_4\cap P_5$. To apply Theorem \ref{thm:example}, we need to check that $\Spec(A/(I_1+I_2))-\mathfrak{m}$ is disconnected. Let $J=(P_1+P_4)\cap (P_2+P_4)\cap (P_1+P_5)$ and $K=(P_3+P_4)\cap (P_2+P_5)\cap (P_3+P_5)$. Then $J+K$ is $\mathfrak{m}$-primary and $\sqrt{J\cap K}=\sqrt{I_1+I_2}$. Therefore, the hypotheses of Theorem \ref{thm:example} are satisfied and $\cd(A,I)=d-2.$  
\end{example}

\section{Questions and Further Directions}\label{section:questions}

\begin{definition}
    Let $A$ be a Noetherian local ring and $B=\widehat{((\widehat A)^{sh})}$, the completion of the strict Henselization of the completion of $A$. An ideal $I$ of $A$ is said to be \textit{formally geometrically irreducible} if $IB$ has a unique minimal prime in $B$.
\end{definition}

In 1990, Huneke and Lyubeznik \cite{HunLyu} improved on Faltings' bound in equal characteristic under the assumption that the ideal in question is formally geometrically irreducible.

\begin{theorem}[{\cite[Theorem 3.8]{HunLyu}}]\label{thm:HunLyu_irred}
    Let $A$ be a $d$-dimensional regular local ring of equal characteristic and $I$ be a formally geometrically irreducible ideal of $A$ of big height $c$. Suppose $0<c<d.$ Then $$\cd(A,I)\leq d-1-\floor*{\frac{d-2}{c}}.$$
\end{theorem}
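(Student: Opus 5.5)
The plan is to pass to a ring where $I$ has a single minimal prime, and then read off the vanishing from the structure theorem of Lyubeznik (Theorem \ref{thm:Lyu}) combined with Faltings' general bound. The expected obstacle is gaining exactly one more degree of vanishing beyond Faltings' bound $d-\floor*{\frac{d-1}{c}}$. This is where the unique minimal prime is used.

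\textbf{Reduction.} Let $B=\widehat{((\widehat A)^{sh})}$. The map $A\to B$ is faithfully flat with zero-dimensional closed fiber, so local cohomology base changes and $\cd(B,IB)=\cd(A,I)$. By Lemma \ref{lemma:fflat}, $\bight(IB)=\bight(I)=c$, and $B$ is again a $d$-dimensional regular local ring of equal characteristic. We may also replace $I$ by its radical. So we may assume $A$ is complete with separably closed residue field and, by formal geometric irreducibility, that $I=P$ is prime of height $c$. Put $u=\floor*{\frac{d-2}{c}}$ and $v=d-u$. The goal is $H^q_P(A)=0$ for all $q\geq v$.

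\textbf{Degrees $q>v$.} Since $\frac{d-1}{c}\geq\frac{d-2}{c}$, Faltings' bound \cite[Korollar 2]{Faltings} gives
$$\cd(A,P)\leq d-\floor*{\frac{d-1}{c}}\leq d-u=v.$$
Hence $H^q_P(A)=0$ for $q>v$.

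\textbf{Degree $q=v$.} This is the key step, and Theorem \ref{thm:Lyu} handles it; its hypotheses $d>1$ and $c>0$ hold because $0<c<d$.
\begin{itemize}
\item If $u\geq 1$, Theorem \ref{thm:Lyu} identifies $H^v_P(A)$ with the cokernel of $\Phi_P$. The target of $\Phi_P$ is a direct sum indexed by strictly increasing $(u+1)$-tuples $j_0<\dots<j_u$ of minimal primes. With only $t=1$ minimal prime there are no such tuples, so the target is zero and $H^v_P(A)=0$.
\item If $u=0$, then $c=d-1$ and $v=d$. Since $P$ is not $\mathfrak m$-primary, the Hartshorne--Lichtenbaum Vanishing Theorem gives $H^d_P(A)=0$.
\end{itemize}
In either case $\cd(A,P)\leq v-1=d-1-\floor*{\frac{d-2}{c}}$.

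A more naive route would feed the bound into Faltings' inductive criterion \cite[Satz 1]{Faltings} via localizations. That loses exactly one degree whenever $\floor*{\frac{d-2}{c}}>\floor*{\frac{d-s-2}{c}}$, which is why I would use Lyubeznik's structure theorem instead.
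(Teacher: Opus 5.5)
Your reduction to a prime $P$ in a complete ring with separably closed residue field is fine. So is the use of \cite[Korollar 2]{Faltings} for degrees $q>v$, and the Hartshorne--Lichtenbaum argument when $u=0$.

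\textbf{The gap is the case $u\geq 1$, which is the entire content of the theorem.} Theorem \ref{thm:Lyu} with $t=1$ and $u\geq 1$ says exactly that $H^v_P(A)=0$. In other words, it \emph{is} Theorem \ref{thm:HunLyu_irred} for a prime, in the one degree that Faltings' bound does not reach. It is not an independent input. Lyubeznik's structure theorem \cite{Lyu1} is a later refinement built on \cite{HunLyu}. Its proof must show that, in total degree $v$, only the top terms $H^d_{P_{j_0}+\dots+P_{j_a}}(A)$ contribute. That requires the vanishing of $\bigoplus_j H^v_{P_j}(A)$, which is precisely the Huneke--Lyubeznik bound for the individual minimal primes. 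So the argument is circular: you have observed that the statement is a special case of a stronger theorem whose proof presupposes it, rather than proving it.

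\textbf{The missing idea.} The paper points to it right after the statement: for formally geometrically irreducible $I$, Huneke and Lyubeznik sharpen \cite[Satz 1]{Faltings} to \cite[Lemma 3.7]{HunLyu}. In that criterion, vanishing is required only at primes $P\supseteq I$ with $\dim(A/P)\geq s+2$, instead of $s+1$. Here is how it closes the gap:
\begin{enumerate}
\item Take $n=d-\floor*{\frac{d-2}{c}}$. Then $n>c$ because $d\geq c+1$.
\item Any such $P$ has $\dim(A_P)\leq d-s-2$.
\item Faltings' bound on $A_P$, using $\bight(IA_P)\leq c$ and monotonicity, gives $\cd(A_P,IA_P)\leq d-s-2-\floor*{\frac{d-s-3}{c}}$.
\item This is $<n-s$, since $s+1\leq c$ forces $\floor*{\frac{d-2}{c}}\leq \floor*{\frac{d-s-3}{c}}+1$.
\end{enumerate}
This is exactly the degree you correctly noticed the naive route loses. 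Recovering it requires the sharpened inductive criterion, not an appeal to Theorem \ref{thm:Lyu}.
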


Huneke and Lyubeznik proved this bound by improving on \cite[Satz 1]{Faltings} if $I$ is formally geometrically irreducible.\footnote{In fact, Huneke and Lyubeznik proved an even stronger inductive criterion in equal characteristic; see \cite[Theorem 2.5]{HunLyu}.} 

\begin{theorem}[{\cite[Lemma 3.7]{HunLyu}}]
    Let $A$ be a regular local ring of equal characteristic and $I$ be a formally geometrically irreducible ideal of $A$. 

    Set $c=\bight(I)$. Let $n>c$ be an integer. Assume that for all integers $s$, with $1\leq s\leq c-1,$ and for all $q\geq n-s$, $H^q_{IA_P}(A_P)=0$ for all $P\in \Spec(A)$ such that $I\subseteq P$ and $\dim(A/P)\geq s+2.$ Then $H^q_I(A)=0$ for all $q\geq n.$
\end{theorem}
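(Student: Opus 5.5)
The plan follows the structure of Faltings' \cite[Satz 1]{Faltings}, which is the equal characteristic analogue of Theorem \ref{thm:induction}. The only new input is the formally geometrically irreducible hypothesis, which buys one extra dimension in the localization hypothesis ($s+2$ instead of $s+1$).

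First we reduce to the case where $A$ is complete with separably closed residue field and $I$ is prime. We pass to $B=\widehat{((\widehat A)^{sh})}$, which is faithfully flat with zero-dimensional fiber. By Lemma \ref{lemma:fflat} the big height is preserved, and by Lemma \ref{lemma:fflat0dimfiber} the localization hypotheses persist. Since $IB$ has a unique minimal prime, we may take $I=P$ prime. Next, by descending induction on $n$, we may assume $H^q_I(A)=0$ for $q>n$, and we prove $H^n_I(A)=0$ by induction on $\dim(A/I)$.

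The support reduction proceeds as in Theorem \ref{thm:induction}. Take $Q$ minimal in $\Supp H^n_I(A)$ with $Q\neq\mathfrak m$. Then $\widehat{A_Q}$ satisfies the hypotheses of the statement, because $\dim(A/Q')$ drops by at least one upon localizing. So either the induction on $\dim(A/I)$ applies, or Faltings' Satz 1 applies directly. One subtlety is that $I\widehat{A_Q}$ need not remain formally geometrically irreducible. Here I would use that the hypothesis for $A$ at dimension $\ge s+2$ becomes a hypothesis at dimension $\ge s+1$ for $\widehat{A_Q}$, since $\dim(A/Q)\ge 1$. That is exactly what Satz 1 requires, so Satz 1 can be invoked there without irreducibility. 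Hence $\Supp H^n_I(A)\subseteq\{\mathfrak m\}$.

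For the module supported at $\mathfrak m$, we use the equal-characteristic tools: $H^n_I(A)$ is injective (Huneke--Sharp in characteristic $p$, Lyubeznik's $D$-module theory in characteristic $0$) with finite Bass numbers. So $H^n_I(A)\cong E_A(k)^{\oplus a}$, and it suffices to produce a functor that kills $H^n_I(A)$ but not $E$. We use the diagonal construction $D=A/I\ctens_k A$ with $\Delta$ generated by $e$ elements, where $e=\dim A$, and the spectral sequence $E_2^{i,j}=H^i_\Delta(D\dtens_A H^j_I(A))\Rightarrow H^{i+j}_{\Delta}(D)$. As in the proof of Theorem \ref{thm:induction}, the abutment at total degree $d+n$, with $d=\dim(A/I)$, vanishes because $d+n>d+c=e$, so the abutment degree exceeds the number of generators of $\Delta$. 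The target term is $E_2^{d,n}$, and we have $H^d_\Delta(D\dtens_A E)\cong H^{d+e}_{\mathfrak n}(D)\neq0$. Incoming differentials vanish by the descending induction. It remains to show the outgoing targets $E_2^{d+r,n-r+1}$ vanish.

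For $r>c$ this is the generator count. For $2\le r\le c$, set $s=r-1$; a finitely generated submodule $L\subseteq H^{n-s}_I(A)$ has $\dim L\le s+1$ by hypothesis, not $\le s$ as in Theorem \ref{thm:induction}. We therefore need $H^{d+s}_\Delta(A/I\ctens_k L)=0$ even though $\dim(A/I\ctens_k L)$ can equal $d+s+1$. This is the main obstacle, and it is where irreducibility enters. Since $A/I$ is a complete domain with separably closed residue field, $A/I\ctens_k L$ is ``irreducible enough'' that the second vanishing theorem (Hartshorne--Lichtenbaum plus connectedness, \cite[Theorem 2.9]{HunLyu}) gives $H^{j}_\Delta=0$ for $j\ge \dim-1$, which is the needed extra degree.

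Concretely, I would first reduce to $L$ with a filtration by $A/Q$ with $Q$ prime and $\dim(A/Q)\le s+1$. Then I would check that $D/QD=A/I\ctens_k A/Q$ has connected punctured spectrum after cutting by $\Delta$, using that $A/I$ and $A/Q$ are complete domains over a separably closed field, so their completed tensor product is a domain up to nilpotents. Finally I would apply the second vanishing theorem to the ideal $\Delta$ in $D/QD$ to get vanishing in degree $d+s=\dim-1$.
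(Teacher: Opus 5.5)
The paper does not prove this statement; it quotes it from Huneke--Lyubeznik. So your argument has to stand on its own, and most of it does. The following all work:
\begin{itemize}
\item the reduction to $A$ complete with separably closed residue field and $I$ prime;
\item the support reduction (Faltings' Satz~1 in fact applies directly to every $\widehat{A_Q}$ with $Q\neq\mathfrak m$, so neither induction on $\dim(A/I)$ nor irreducibility of $I\widehat{A_Q}$ is needed);
\item the vanishing of the abutment in degree $d+n>e$, and of the incoming differentials;
\item the final contradiction with $H^d_\Delta(D\otimes_A E_A(k))\cong H^{d+e}_{\mathfrak n}(D)\neq 0$.
\end{itemize}

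The gap is in the key step, and it has two parts. First, the degree is wrong. The differential $d_r$ leaving $E_r^{d,n}$ lands in $E_r^{d+r,n-r+1}=E_r^{d+s+1,\,n-s}$. So you need $H^{d+s+1}_\Delta(A/I\ctens_k L)=0$, which is vanishing in the \emph{top} degree when $\dim L=s+1$. The degree $d+s$ you wrote is inherited from Theorem~\ref{thm:induction}, whose target is $E^{d-1,n}$ rather than $E^{d,n}$. Second, the tool does not apply. The second vanishing theorem is a statement about ideals in a \emph{regular} local ring, while $A/I\ctens_k A/Q$ is typically singular. In singular rings, vanishing one below the dimension can fail even when the quotient has connected punctured spectrum: for example $H^2_{(x,y)}(k[[x,y,u,v]]/(xu-yv))\neq 0$, although the ring is $3$-dimensional and its quotient by $(x,y)$ is $k[[u,v]]$. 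As written, the crucial vanishing is not established.

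The repair is Hartshorne--Lichtenbaum vanishing at the correct degree.
\begin{itemize}
\item Filter $L$ by modules $A/Q$ with $Q$ prime. Since $L$ is $I$-torsion, $Q\supseteq I$.
\item If $\dim(A/Q)\le s$, then $\dim(A/I\ctens_k A/Q)\le d+s$ and Grothendieck vanishing suffices.
\item If $\dim(A/Q)=s+1$, then $R=A/I\ctens_k A/Q$ is a complete local ring of dimension $d+s+1$, and $R/\Delta R\cong A/Q$ has positive dimension. Hence $H^{d+s+1}_\Delta(R)=0$ as soon as every minimal prime $\mathfrak P$ of $R$ with $\dim R/\mathfrak P=d+s+1$ satisfies $\dim R/(\mathfrak P+\Delta R)>0$. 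This is automatic if $R$ has a unique minimal prime.
\end{itemize}
That last fact is that the completed tensor product, over a separably closed coefficient field, of two complete local domains has a unique minimal prime. This is precisely where formal geometric irreducibility is used: if $I$ had several top-dimensional minimal primes $P_i$, some $P_i+Q$ could be $\mathfrak m$-primary. You must prove or cite this fact. Your phrase ``domain up to nilpotents'' asserts it without justification, and it genuinely needs the separably closed hypothesis; it fails for $\mathbb{R}[[x,y]]/(x^2+y^2)$ tensored with itself over $\mathbb{R}$. Once it is in place, exactness of $D\otimes_A-$, the long exact sequences along the filtration, and passage to direct limits give $E_2^{d+s+1,n-s}=0$, and the rest of your argument goes through.
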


\begin{question}\label{question:irredbound}
    Let $A$ be a $d$-dimensional regular local ring of unramified mixed characteristic and $I$ be a formally geometrically irreducible ideal of $A$ of big height $c$. Suppose $0<c<d$. Is $$\cd(A,I)\leq d-1-\floor*{\frac{d-2}{c}}?$$
\end{question}

\begin{proposition}
    Given $A$, $I$, $d$ and $c$ as above, if $p$ is in $I$ or a zerodivisor on $A/I$, then $$\cd(A,I)\leq d-1-\floor*{\frac{d-2}{c}}.$$ 
\end{proposition}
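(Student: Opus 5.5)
Since $p$ lies in $I$ or is a zerodivisor on $A/I$, I plan first to reduce to the case where $I$ is a prime ideal containing $p$. After that, a single application of \cite[Theorem 3.8]{HunLyu} to the equal characteristic $p$ ring $A/(p)$ should suffice, followed by the Bockstein long exact sequence as in Case 2 of the proof of Theorem \ref{thm:cdbound}.

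\emph{Reduction.} Let $B=\widehat{((\widehat A)^{sh})}$. The extension $A\to B$ is faithfully flat with zero-dimensional closed fiber, and $B$ is again an unramified regular local ring of mixed characteristic $(0,p)$, of dimension $d$, with separably closed residue field. Local cohomology commutes with this flat base change, so $\cd(A,I)=\cd(B,IB)$. By Lemma \ref{lemma:fflat}, $\bight(IB)=c$. By hypothesis $IB$ has a unique minimal prime $Q$, so $\sqrt{IB}=Q$ and $\height(Q)=c$. Hence we may replace $A$ by $B$ and $I$ by $Q$.

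The hypothesis on $p$ survives this replacement. If $p\in I$, then $p\in Q$. If $p$ is a zerodivisor on $A/I$, then $p$ lies in some associated prime of $A/I$. Flat base change carries this to an associated prime of $B/IB$ containing $p$, and every such prime contains $Q$. So $p$ is a zerodivisor on $B/IB$, and since $\operatorname{Ass}(B/IB)$ is finite we get $p\in Q$ by prime avoidance. In either case we are reduced to $I=Q$ prime with $p\in Q$, and $A$ complete with separably closed residue field.

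\emph{Passing to $A/(p)$.} Because $A$ is unramified, $\bar A=A/(p)$ is a complete regular local ring of equal characteristic $p$, of dimension $d-1$, with separably closed residue field. The ideal $\bar Q=Q/(p)$ is prime of height $c-1$. Being prime in a complete ring with separably closed residue field, $\bar Q$ is formally geometrically irreducible.

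If $c=1$, then $Q=(p)$ and $\cd(A,I)=1=d-1-\floor*{\frac{d-2}{1}}$, so the bound holds. If $c\geq 2$, then $0<c-1<d-1$, and Theorem \ref{thm:HunLyu_irred} gives
$$\cd(\bar A,\bar Q)\leq d-2-\floor*{\frac{d-3}{c-1}}.$$

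\emph{Lifting back to $A$.} As in Case 2 of Theorem \ref{thm:cdbound}, consider the exact sequence
$$H^{i}_{Q}(\bar A)\to H^{i+1}_Q(A)\xrightarrow{p}H^{i+1}_Q(A).$$
Multiplication by $p$ is injective on $H^{j}_Q(A)$ for all $j>\cd(\bar A,\bar Q)+1$. Since $p\in Q$, the module $H^j_Q(A)$ is $p$-power torsion, so it vanishes for these $j$. This gives $\cd(A,I)\leq d-1-\floor*{\frac{d-3}{c-1}}$.

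\emph{Comparing the bounds.} It remains to show $\floor*{\frac{d-3}{c-1}}\geq\floor*{\frac{d-2}{c}}$. Write $d-2=mc+r$ with $0\leq r<c$. Then
$$d-3=m(c-1)+(m+r-1),$$
so if $m+r\geq 1$ the left-hand floor is at least $m$, as required. The case $m=r=0$ means $d=2$, which forces $c=1$ and was handled above.

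The only delicate step is the reduction: checking that the zerodivisor hypothesis on $p$ is preserved by base change to $B$, and that $\bar Q$ remains formally geometrically irreducible, so that Theorem \ref{thm:HunLyu_irred} applies. Both points follow from flatness and from primality in a complete ring with separably closed residue field.
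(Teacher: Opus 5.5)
Your strategy is the paper's: pass to $\widehat{((\widehat A)^{sh})}$, reduce to a prime $Q$ containing $p$, apply Theorem \ref{thm:HunLyu_irred} to $Q/(p)$ in $A/(p)$, lift with the multiplication-by-$p$ sequence, and compare floors. Your check that $\floor*{\frac{d-3}{c-1}}\geq\floor*{\frac{d-2}{c}}$ fills in a step the paper leaves implicit.

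There is a gap at exactly the step you flagged as delicate. From ``$p$ is a zerodivisor on $B/IB$'' you conclude $p\in Q$ by prime avoidance. But every associated prime of $B/IB$ \emph{contains} $Q$, so knowing that $p$ lies in one of them says nothing about $Q$. Prime avoidance only restates that the set of zerodivisors is the union of the associated primes. The inference fails whenever $p$ lies only in embedded primes. For instance, take $A=V[[x_2,\dots,x_d]]$ with $V$ complete with separably closed residue field, and $I=(x_2,\dots,x_{c+1})\cap\mathfrak m^2$. Then $I$ is formally geometrically irreducible of big height $c$, and $px_2\in I$ while $x_2\notin I$, so $p$ is a zerodivisor on $A/I$. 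Yet $p\notin Q=(x_2,\dots,x_{c+1})$.

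This step cannot be rescued under the literal hypothesis. For any formally geometrically irreducible prime $Q_0$ with $p\notin Q_0$, the ideal $Q_0\cap\mathfrak m^N$ with $N\gg0$ satisfies the literal hypothesis and has radical $Q_0$. So the literal statement would contain the nonzerodivisor case of Question \ref{question:irredbound}, which the paper leaves open.

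The hypothesis must therefore be read as ``$p$ is a zerodivisor on $A/\sqrt{I}$'', i.e.\ $p$ lies in a minimal prime of $I$. The paper uses this reading tacitly when it replaces $I$ by its radical and asserts $p\in I$. With that reading the repair is immediate. Now $p$ lies in some minimal prime $P$ of $I$. By the proof of Lemma \ref{lemma:fflat}, every minimal prime of $PB$ is a minimal prime of $IB$, hence equals $Q$. Thus $p\in PB\subseteq Q$, and the rest of your argument goes through as written.
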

\begin{proof}
    If $c=1$, $I$ is principal and $\cd(A,I)\leq 1\leq d-1-\floor*{\frac{d-2}{c}}$ is immediate. Assume $c>1.$
    
    We may assume that $A$ is complete and has separably closed residue field. Then, as $I$ is formally geometrically irreducible, it has a unique minimal prime. We may assume that $I$ is radical and hence prime. Now we have $p\in I.$
    
    As $A$ is unramified, $A/(p)$ is a complete regular local ring of equal characteristic $p$. Observe that $\dim(A/(p))=d-1$, $\bight(I/(p))=c-1$ and $I/(p)$ has a unique minimal prime in $A/(p).$ By Theorem \ref{thm:HunLyu_irred}, $$\cd(A/(p),I/(p))\leq d-2-\floor*{\frac{d-3}{c-1}}.$$ Consider the long exact sequence of local cohomology modules $$\cdots\to H_I^i(A/(p))\to H_I^{i+1}(A)\xlongrightarrow{p} H_I^{i+1}(A)\to\cdots.$$ As $p\in I$, $H_I^i(A)$ is $p$-power torsion for all $i$. Therefore, \[\cd(A,I)\leq\cd(A/(p),I/(p))+1\leq d-1-\floor*{\frac{d-3}{c-1}}\leq d-1-\floor*{\frac{d-2}{c}}.\qedhere\]
\end{proof}

Question \ref{question:irredbound} remains unanswered if $p$ is a nonzerodivisor on $A/I.$ Lastly, we pose:

\begin{question}\label{question:ramified}
    Let $A$ be a $d$-dimensional ramified regular local ring of mixed characteristic $(0,p)$. Let $I\neq0$ be an ideal of $A$ of big height $c$. Is $\cd(A,I)\leq d-\floor*{\frac{d-1}{c}}?$
\end{question}

In the following remark, we discuss a possible approach to Question \ref{question:ramified}, and attempt to use the sharpness of the bound in unramified mixed characteristic to answer it in the negative.

\begin{remark}
    Given $A$, $I$, $d$ and $c$ as above, we may assume $A$ is complete in our analysis of Question \ref{question:ramified}. By Cohen's Structure Theorem, $$A=\frac{V[[x_1,\dots,x_d]]}{(g-p)},$$ where $V$ is a discrete valuation ring with uniformizer $p$ and $g\in (p,x_1,\dots,x_d)^2\setminus (p)$. Let $B=V[[x_1,\dots,x_d]]$ and $f=g-p\in B.$ 

    Let $J=I+(f)$ be an ideal of $B$. By Lemma \ref{lemma:radical(I+p)}, $\bight(J)\leq\bight(I)+1\leq c+1.$ By Theorem \ref{thm:cdbound}, we have that $$\cd(B,J)\leq d+1-\floor*{\frac{d}{c+1}}.$$ Consider the long exact sequence of local cohomology modules $$H^{n-1}_J(B)\xlongrightarrow{f}H^{n-1}_J(B)\to H_I^{n-1}(A)\to H^n_J(B)\xlongrightarrow{f}H^n_J(B).$$
    Observe that if $f\in J$ and $H^n_J(B)\neq0$, then $H_I^{n-1}(A)\neq0$. Thus, to obtain an example such that $\cd(A,I)>d-\floor*{\frac{d-1}{c}}$ via this approach, we need to choose $d$ and $c$ such that $$\bigg(d+1-\floor*{\frac{d}{c+1}}\bigg)-1>\bigg(d-\floor*{\frac{d-1}{c}}\bigg),$$ which reduces to $$\floor*{\frac{d-1}{c}}>\floor*{\frac{d}{c+1}}.$$ It remains to find an ideal $J$ in $B$ such that $f\in J$, $\bight(J)=c+1$ and $\cd(A,J)=d+1-\floor*{\frac{d}{c+1}}.$ Indeed, for such a $J$, $\cd(A,J/(f))\geq \bigg(d+1-\floor*{\frac{d}{c+1}}\bigg)-1>\bigg(d-\floor*{\frac{d-1}{c}}\bigg).$

    Let $N=\floor*{\frac{d-1}{c}}$ and $M=\floor*{\frac{d}{c+1}}$. We attempt to construct a $J$ as in Example \ref{example:sharpness}. We need ideals $I_0,\dots,I_M\subseteq B$ of big height $c+1$ such that $f\in I_j$ for all $j$ and $I_0+\dots+I_M$ is $(p,x_1,x_2,\dots,x_d)$-primary. However, by Theorem \ref{thm:Serre} applied to the ideal $(I_0+\dots+I_M)/(f)$ of $A$, 
    \begin{align*}
        \height(I_0+\dots+I_M)&\leq 1+(M+1)c\\
        &\leq 1+Nc\\
        &\leq 1+(d-1)\\
        &<d+1.
    \end{align*}
    Hence, such a choice of $I_0,\dots,I_M$ does \emph{not} exist, and we do not obtain a counterexample via this approach.

    To the best of our knowledge, Question \ref{question:ramified} remains unanswered.
\end{remark}

\section*{Acknowledgments}
I am grateful to Linquan Ma for many insightful discussions and helpful suggestions. I thank Uli Walther for his perceptive comments and advice that improved the manuscript. I am indebted to Vaibhav Pandey for numerous enlightening conversations, and for his constant support and encouragement. I also thank Karl Schwede and Bernd Ulrich for valuable discussions concerning this work.

A substantial portion of this work was completed while I was visiting the Institute for Advanced Study in Princeton. I thank IAS for their warm hospitality and Linquan Ma for the invitation to visit.

This work was supported by NSF Grants DMS-2302430 and DMS-2100288, and by Simons Foundation Grant SFI-MPS-TSM-00012928.

\bibliographystyle{alpha}
\bibliography{main.bib}

\end{document}